\newtheorem{theorem}{Theorem}[section]
\newtheorem{definition}[theorem]{Definition}
\newtheorem{prop}[theorem]{Proposition}
\newtheorem{lemma}[theorem]{Lemma}
\newtheorem{cor}[theorem]{Corollary}
\newcommand{\Aa}{{\mathcal A}}
\newcommand{\Hh}{{\mathcal H}}
\newcommand{\Ll}{{\mathcal L}}
\newcommand{\Rr}{{\mathcal R}}
\newcommand{\id}{{\mathrm{id}}}
\renewcommand{\sf}{{\mathfrak s}}
\newcommand{\R}{\mathbb{R}}
\newcommand{\N}{\mathbb{N}}
\newcommand{\Z}{\mathbb{Z}}
\newcommand{\M}{\mathcal M}
\newcommand{\im}{\mbox{\rm im}}
\title{Complete regularity of Ellis semigroups of $\mathbb Z$-actions}
\author{Marcy Barge}
\address{Montana State University,
Department of Mathematical Sciences,
Bozeman, MT 59717, USA}
\email{barge@math.montana.edu}
\author{Johannes Kellendonk}
\address{Univerisit\'{e} de Lyon, Universit\'{e} Claude Bernard Lyon 1, Institute Camille Jordan, CNRS UMR 5208, 69622 Villeurbanne, France}
\email{kellendonk@math.univ-lyon1.fr}
\subjclass[2010]{54H20, 37B10, 20M17}
\begin{document}

\maketitle

\begin{abstract}
It is shown that the Ellis semigroup of a $\mathbb Z$-action on a compact totally disconnected space is  completely regular if and only if forward proximality coincides with forward asymptoticity and backward proximality coincides with backward asymptoticity. Furthermore, the Ellis semigroup of a $\mathbb Z$- or $\mathbb R$-action for which forward proximality and backward proximality are transitive relations is shown to have  at most two left minimal ideals. Finally, the notion of near simplicity of the  Ellis semigroup is introduced and related to the above.
\end{abstract}
\section{Introduction}
Associated to any topological dynamical system $(X,\alpha,T)$ is a semigroup $E(X,T)$, its enveloping, or Ellis semigroup. This is a completion of the set 
$\{\alpha^t,\ t\in T\}$ of continuous surjections given by the action of the group (or semigroup) $T$.   
Its algebraic and topological properties reflect those of the dynamical system. 
We focus here on one algebraic property called {\em complete regularity}
with the aim to understand what complete regularity of the Ellis semigroup implies dynamically. 

By construction, $E(X,T)$ contains a unit, the identity map.
It contains furthermore a (unique) minimal {two-sided} ideal $\M(X,T)$.
$\M(X,T)$ is a completely simple semigroup and therefore a disjoint union of isomorphic groups.  We say that $E(X,T)$ is {\em nearly simple}, if all its non-invertible elements belong to its minimal two-sided ideal $\M(X,T)$.  
Examples of dynamical systems whose Ellis semigroups are nearly simple include 
Sturmian subshifts and subshifts defined by bijective substitutions \cite{ESBS}.
A semigroup is {\em completely regular} if it is a disjoint union of groups, but these groups do not have to be isomorphic. A nearly simple Ellis semigroup is completely regular. 
The Ellis semigroup of a dynamical system associated with a higher dimensional almost canonical cut and project tiling is completely regular \cite{Aujogue}, and it is nearly simple
if and only if the tiling has maximal complexity exponent \cite{Aujogue-Barge-Kellendonk-Lenz}.

If $T=\Z$ or $\R$ then we can focus on the forward dynamics, i.e.\ the dynamics under the semigroup $T^+$ of positive $t\in T$, and define 
the {\em adherence semigroup} $\Aa(X,T^+)$ as the subsemigroup of elements of $E(X,T)$ which are limits of nets $(\alpha^{t_\nu})$ with $\lim t_{\nu} = +\infty$.  $\Aa(X,T^+)$ contains the minimal two-sided ideal $\M(X,T^+)$ of
$E(X,T^+)$, where $E(X,T^+)$ is the completion of the homeomorphisms $\alpha^t$ with $t\geq 0$.

The semigroup $E(X,T)$ captures the proximality relation: Two points $x,y\in X$ are proximal if $\inf_{t\in T} d(\alpha^t(x),\alpha^t(y)) = 0$, and this is the case if and only if there exists $f\in E(X,T)$ such that $f(x) = f(y)$. If $T=\Z$ or $\R$ then by restricting the above infimum to $t\in T^+$ we obtain the {\em foward proximality} relation. 
Points $x,y\in X$ are called {\em forward asymptotic} if $\lim_{t\to+\infty} d(\alpha^t(x),\alpha^t(y)) = 0$.
In this context, the following algebraic characterisations are known. For the first two see, for example, 
\cite{Auslander,hindman}. The third characterisation is proven in  \cite{Blanchard} for $T=\Z$ but the proof given there carries over to $T=\R$.
\begin{enumerate}
\item $E(X,T)$ is a group if and only if the proximal relation is trivial in the sense that $x$ is proximal to $y$ only if $x=y$. Dynamical systems with this property are called {\em distal}.
Likewise,  $E(X,T^+)$ is a group if and only if the forward proximal relation is trivial.
\item
$E(X,T)$ contains a unique minimal left ideal if and only if the proximal relation is transitive.
Likewise,  $E(X,T^+)$ contains a unique minimal left ideal if and only if the forward proximality relation is transitive.
\item
$\Aa(X,T^+)$ is left simple (that is, has no proper left ideals) if and only if forward proximality agrees with forward asymptoticity. Dynamical systems with this property are  called {\em forward almost distal}. We will see that
in this case also $\M(X,T^+)$ is left simple and $E(X,T^+)$ is nearly simple.
\end{enumerate}
Intuitively speaking, two points are forward proximal if they become arbitrarily close under the forward dynamics and they are even asymptotic if they stay closer and closer under the forward dynamics. Asymptoticity implies proximality but not the other way around. A forward proximal pair which is not forward asymptotic is called a {\em forward Li-Yorke pair} \cite{Blanchard}. 

We show in this article that, when the space $X$ is {\em totally disconnected}, a forward Li-Yorke pair gives rise to an element in $E(X,\Z^+)$ which is not injective on its image.  
 By applying this result to the forward and the backward dynamics of a $\Z$-action we are able to characterise the $\Z$-actions on totally disconnected spaces which have a completely regular Ellis semigroup as those 
for which there are no forward and no backward Li-Yorke pairs (Thm.~\ref{thm-complete-regularity}). 
Furthermore, the notions of complete regularity and near simplicity coincide for 
 $\Z$-actions on totally disconnected spaces.
One implication of the above is true for any $\Z$-action and can be easily extended to 
$\R$ actions: The absence of forward and backward Li-Yorke pairs implies near simplicity. 

We show moreover that the Ellis semigroup $E(X,T)$ has at most two minimal left ideals if forward proximality and backward proximality are transitive (Thm.~\ref{thm-E1}).

In the final section we provide an explicit example of an Ellis semigroup which is not completely regular. This example is given by a substitution of constant length. It is 
backward almost distal, but not forward almost distal.
%%%%%%%%%%%%%%%%%%
\section{Background on semigroups}
We provide some background material on semigroups. A general reference is \cite{howie1995fundamentals}.
A semigroup is a (non-empty) set with (associative) binary operation. We denote it multiplicatively $ab$.
A semigroup with a unit element is called a monoid. No semigroup in this article will have a zero element.

\subsection{Ideals and idempotents}
A (left, right, or two-sided) ideal of a semigroup $S$ is a non-empty subsemigroup $I\subset S$ satisfying $SI\subset I$, $IS\subset I$, $SI\cup IS\subset I$. 
(Left, right, or two-sided) ideals are ordered by inclusion.
Whereas the intersection of two left ideals may be empty this is not the case for the intersection of two two-sided ideals and therefore a minimal two-sided ideal of $S$ is unique, if it exists. This ideal is called the {\em kernel} of $S$ and must contain all minimal left (and all minimal right) ideals.

A semigroup is called (left, right, or two-sided) simple if it has no proper (left, right, or two-sided) ideal. Instead of two-sided simple we also say simple. Note that a left simple semigroup is simple, as a two-sided ideal is a left ideal.

The kernel $\M$ of a semigroup is simple, for if it contains an ideal $I$ and $a\in I$ then $\M a\M$ is an ideal of $S$ which is contained in $I$. By minimality of $\M$ we thus have $I=\M$. 

We recall three of the famous Green's relations. Given a semigroup $S$
we let  $S^1$ be the monoid which is $S$, if $S$ has a unit, or $S$ with added unit $1$, if it has none. 
Two elements $a,b\in S$ are in the same $\Ll$-class if they generate the same left ideal, $S^1a = S^1b$.  They are in the same 
 $\Rr$-class if they generate the same right ideal, $a S^1 = b S^1$.
Finally,  the $\Hh$-classes of $S$ are the intersections of the $\Ll$-classes with the $\Rr$-classes.

An idempotent of a semigroup $S$ is an element $p\in S$ satisfying $pp=p$. The set of idempotents of $S$ carries an order relation: 
$$p\leq q\quad \mbox{if}\quad p = pq = qp.$$ 
An idempotent is called {\em minimal} if it is minimal w.r.t.\ the above order.

\subsection{Inverses and completely regular elements of a semigroup}
%%%%%%%%%%%%%%%%%%%%%%%%%%%%%
An element $a$ of monoid $S$ is invertible if there exists $b\in S$ such that $ab=ba=1$, where $1$ is the unit element of $S$. $b$ is then called the inverse of $a$ and written $b = a^{-1}$. The invertible elements of a monoid form a group with neutral element $1$.
If the kernel of a monoid  contains an invertible element, then it contains the identity and hence coincides with the monoid. 
 
\begin{definition}
We call a monoid $S$ {\em nearly (left, right, or two-sided) simple} if $S$ has a unique minimal 
(left, right, or two-sided) ideal and that ideal contains all non-invertible elements. 
\end{definition}

More generally, in any semigroup $S$, $b\in S$ is called a {\em generalised inverse} of $a\in S$ if
$a= aba$ and $b = bab$. Not every element has a generalised inverse, neither are they unique when they exist. An element which admits a generalized inverse is called {\em regular}. It turns out that $a\in S$ is regular already if there exists $x\in S$ such that $a = axa$, because then
$y = xax$ is a generalized inverse for $a$.

An element $a\in S$ is called {\em completely regular} if there exists $x\in S: a = axa$ and $ax = xa$. This then implies that $y=xax$ is a generalised inverse for $a$ such that $ay=ya$.
A generalised inverse for $a$ which commutes with $a$ is unique if it exists.
We call such a commuting inverse the {\em normal inverse} of $a$ and denote it by $a^{-1}$, and set $a^0 = a a^{-1}=a^{-1}a$. 
An invertible element in a monoid is thus a completely regular element for which $a^0 = 1$, and there is no danger of confusion, as for such elements generalised inverses are unique and coincide with the monoid inverse.

Completely regular elements will play an important role in what follows. We provide two 
criteria for complete regularity. 
\begin{lemma}[\cite{howie1995fundamentals}]
Let $a\in S$. The following are equivalent:
\begin{enumerate}
\item $a$ is completely regular.
\item The $\Hh$-class of $a$, $\Hh_a$ is a group.
\end{enumerate}
\end{lemma}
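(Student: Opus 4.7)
The plan is to prove the two implications directly, relying on the classical result (found in the Howie reference already cited) that an $\Hh$-class containing an idempotent is a subgroup of $S$, together with the explicit description of that subgroup.

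For $(1) \Rightarrow (2)$, I take $x$ with $a = axa$ and $ax = xa$ and set $e := ax = xa$. The element $e$ is idempotent by the one-line computation $e^2 = (ax)(ax) = (axa)x = ax = e$. To show $e \in \Hh_a$, I would check both $a \,\Ll\, e$ and $a \,\Rr\, e$: reassociating $axa$ in two different ways gives $a = (ax)a = ea$ and $a = a(xa) = ae$, which, combined with $e = ax \in aS^1$ and $e = xa \in S^1 a$, yield the equalities $aS^1 = eS^1$ and $S^1 a = S^1 e$. Invoking the cited result, $\Hh_a$ is a group.

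For $(2) \Rightarrow (1)$, I let $e$ denote the identity of the group $\Hh_a$ and let $b$ denote the inverse of $a$ in that group. Then $ab = ba = e$ and $ae = ea = a$, so $x := b$ satisfies $axa = a(ba) = ae = a$ and $ax = ab = ba = xa$, witnessing complete regularity of $a$.

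The only genuinely non-trivial ingredient is the background theorem on $\Hh$-classes containing idempotents; once this is available, the argument reduces to a handful of associativity-based one-line checks. The main thing to be careful about is confirming $e \in \Hh_a$ using only the defining identities of $x$, and, in the converse direction, making sure that the commutation $ab = ba$ really is a consequence of $a$ and $b$ lying in the same $\Hh$-class, which it is because multiplication inside the group $\Hh_a$ agrees with multiplication in $S$.
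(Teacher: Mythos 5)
Your argument is correct. The paper itself gives no proof of this lemma --- it is quoted from Howie's book --- so there is nothing to compare against except the standard textbook treatment, which is exactly what you reproduce: the forward direction exhibits the idempotent $e=ax=xa$ inside $\Hh_a$ via the identities $a=ae=ea$, $e=ax$, $e=xa$, and then invokes Green's theorem that an $\Hh$-class containing an idempotent is a subgroup; the converse reads off $x=b$ from the group structure. Your reliance on Green's theorem is not circular (it is a genuinely prior result, also in the cited reference), and you are right to flag that the multiplication in the group $\Hh_a$ must be the restriction of that of $S$ --- that is part of what ``$\Hh_a$ is a group'' means here and is what makes the computations $ab=ba=e$, $ae=a$ legitimate in $S$.
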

The element $a^0$ is an idempotent and plays the r\^ole of the neutral element in the group  $\Hh_a$. So the normal inverse of $a$ in $S$ is the group inverse of $a$ in  $\Hh_a$.

We are particularily interested in subsemigroups of $X^X$, as we denote the semigroup of functions $f:X\to X$ with composition as semigroup product. In this case we have the following useful criterion for complete regularity.
\begin{lemma}\label{lem-cpreg1}
Let $f\in X^X$. 
$f$ is completely regular  if and only if its restriction to its image is bijective.
In this case $\im f = \im f^2$.
\end{lemma}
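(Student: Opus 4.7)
The plan is to unpack the definition of complete regularity: $f$ is completely regular iff there exists $g\in X^X$ with $f=fgf$ and $fg=gf$. Both directions of the equivalence will be proved directly.

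For the forward direction, assume such a $g$ exists and set $Y:=\im f$. The equation $fgf=f$ rewrites, for $y=f(x)\in Y$, as $f(g(y))=y$, so $f\circ g$ restricts to the identity on $Y$. Combined with $fg=gf$, this gives bijectivity of $f|_Y:Y\to Y$. For injectivity, if $y_1,y_2\in Y$ with $f(y_1)=f(y_2)$, applying $g$ and using $gf=fg$ yields $f(g(y_1))=f(g(y_2))$, whence $y_1=y_2$ by the identity just observed. For surjectivity onto $Y$, any $y=f(x)\in Y$ satisfies $g(y)=g(f(x))=f(g(x))\in Y$ by commutation, and then $y=f(g(y))\in f(Y)$. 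The equality $\im f^2=f(\im f)=\im f$ follows immediately.

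For the converse, suppose $f|_Y:Y\to Y$ is bijective with inverse $h:Y\to Y$, and define $g\in X^X$ by $g(x):=h(h(f(x)))$. Since $f(x)\in Y$ and $h$ maps $Y$ into $Y$, this is well defined. Using $f\circ h=\id_Y$ and $h\circ f|_Y=\id_Y$, a short computation gives $fg(x)=f(h(h(f(x))))=h(f(x))$, since $h(f(x))\in Y$; and $gf(x)=h(h(f^2(x)))=h(f(x))$, since $f^2(x)\in Y$ and $h(f^2(x))=f(x)$ by definition of $h$. Hence $fg=gf$, and $fgf(x)=f(h(f(x)))=f(x)$. Thus $g$ is a commuting generalised inverse, so $f$ is completely regular.

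The only non-routine step is discovering the right formula for $g$ in the converse. A naive attempt like $g=h\circ f$ satisfies $fgf=f$ but fails $fg=gf$; inserting a second application of $h$ absorbs the extra $f$ that appears when one swaps $f$ and $g$, and it is the commutation $fg=gf$ (not the equation $fgf=f$) that forces this modification.
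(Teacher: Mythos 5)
Your proof is correct and takes essentially the same route as the paper's: the same forward argument extracting that $fg$ and $gf$ act as the identity on $\im f$, and the same normal inverse $g=\tilde f^{-2}f$ (your $h(h(f(\cdot)))$) in the converse. One small slip in your closing remark only: the naive choice $g=h\circ f$ in fact \emph{does} satisfy $fg=gf$ (both equal $f$) but fails $fgf=f$ (it yields $f^2$), so the roles of the two conditions are swapped from what you state; this does not affect the proof itself.
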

\begin{proof}
Suppose that $f$ is completely regular with normal inverse $g$. Then $f=fgf=f^2g=g f^2$.
This shows that $gf$ and $fg$ must both restrict to the identity on $\im f$. 
Hence the restriction of $f$ to its image is bijective.

Suppose that the restriction of $f$ to its image is bijective. Denote by $\tilde f:\im f\to \im f$ this restriction. Then $g = \tilde f^{-2} f$ is a normal inverse of $f$. Indeed $fg =  \tilde f^{-1} f$, $gf = \tilde f^{-2} f^2 = \tilde f^{-1} f$, $fgf =  f\tilde f^{-2} f^2 = f$, and $gfg =  \tilde f^{-2} f^2  \tilde f^{-2} f =  \tilde f^{-2} f$.

Clearly, if the restriction of $f$ to its image is bijective then $\im f = \im f^2$.
\end{proof}

\begin{definition}
A semigroup is called completely regular if all its elements are completely regular.
\end{definition}
\begin{lemma}\cite{howie1995fundamentals}\label{lem-union}
Let $S$ be a semigroup. 
The following assertions are equivalent.
\begin{enumerate}
\item $S$ is completely regular.
\item $S$ is a disjoint union of groups.
\item $S$ is a union of groups.
\end{enumerate}
\end{lemma}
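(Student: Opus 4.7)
The plan is to establish the cycle (1) $\Rightarrow$ (2) $\Rightarrow$ (3) $\Rightarrow$ (1). The implication (2) $\Rightarrow$ (3) is immediate from the definitions, so only the remaining two implications require work, and both are short because the preceding lemma (characterising completely regular elements via their $\Hh$-class being a group) does the heavy lifting.

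For (1) $\Rightarrow$ (2), I would invoke the fact that Green's relation $\Hh$ is an equivalence relation on $S$, so the $\Hh$-classes partition $S$ as a set. Assuming (1), every element $a \in S$ is completely regular, and by the cited lemma each $\Hh_a$ is a group (a subsemigroup of $S$ forming a group under the restricted operation, with identity $a^0$). Thus $S$ is a disjoint union of groups, namely its $\Hh$-classes.

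For (3) $\Rightarrow$ (1), I would take any $a \in S$. By hypothesis there exists a subgroup $G \subseteq S$ with $a \in G$; let $e$ be the identity of $G$ and let $x \in G$ be the inverse of $a$ in $G$, so that $ax = xa = e$ and $ea = ae = a$. Then
\[
axa = (ax)a = ea = a \quad \text{and} \quad ax = xa,
\]
which is exactly the definition of complete regularity of $a$. Since $a$ was arbitrary, $S$ is completely regular.

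The main (very mild) subtlety is conceptual rather than technical: in statement (3) the word ``group'' must be interpreted as ``subgroup of $S$'' in the semigroup sense, i.e.\ a subsemigroup whose induced operation makes it a group, with its own idempotent identity which need not be a global unit of $S$. Once this reading is fixed, the computation $axa = a$ goes through unambiguously, and no real obstacle remains.
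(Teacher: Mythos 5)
Your proof is correct and is essentially the standard argument: the paper itself gives no proof (it cites Howie), but its remark immediately after the lemma — that the partition into groups coincides with the partition into $\Hh$-classes — is exactly the decomposition you use for (1) $\Rightarrow$ (2), and your direct verification of $a=axa$, $ax=xa$ for (3) $\Rightarrow$ (1) matches the paper's stated definition of complete regularity. Nothing to add.
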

As one might expect, the partition of $S$ into groups coincides with its partition into $\Hh$-classes. To describe a completely regular semigroup one needs, of course, not only to exhibit its groups, but also how elements of different groups multiply. The corresponding structure theory of completely regular semigroups is very rich \cite{petrich1999completely}.  

\begin{cor}\label{cor-union}
If $S$ is the union of completely regular sub-semigroups, then it is itself completely regular.
\end{cor}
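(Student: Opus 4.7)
The plan is to reduce immediately to Lemma \ref{lem-union}, which says a semigroup is completely regular exactly when it is a union of groups. If $S = \bigcup_{i \in I} S_i$ with each $S_i$ completely regular, then by Lemma \ref{lem-union} applied to each $S_i$, every $S_i$ is itself a union of groups. Taking the union over $i$ shows that $S$ is a union of groups, so Lemma \ref{lem-union} applied to $S$ gives that $S$ is completely regular.

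If one prefers to argue directly from the definition rather than from Lemma \ref{lem-union}, here is the same argument unpacked. Given any $a \in S$, pick some $i$ with $a \in S_i$. By the complete regularity of $S_i$, there exists $x \in S_i$ with $a = axa$ and $ax = xa$. Since $S_i \subset S$, this same $x$ witnesses the complete regularity of $a$ as an element of $S$. As $a \in S$ was arbitrary, $S$ is completely regular.

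There is no real obstacle here: the entire content of the corollary is that complete regularity is a purely element-wise, existentially quantified condition, and the quantifier "there exists $x \in S$" is only relaxed when one passes from $S_i$ to the larger ambient semigroup $S$. No compatibility between the different $S_i$'s, and in particular no closure of the union under multiplication within a single $S_i$, is needed.
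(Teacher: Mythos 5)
Your proof is correct and the first paragraph is exactly the paper's argument: the paper's proof is the one-line remark that this is ``clear from the description of completely regular semigroups as unions of groups,'' i.e.\ Lemma~\ref{lem-union} applied to each $S_i$ and then to $S$. Your second, element-wise argument is an equally valid (and slightly more self-contained) way to see the same thing, correctly noting that the existential witness $x$ only gains from being sought in the larger semigroup.
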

\begin{proof} Clear from the description of completely regular semigroups as unions of groups.
\end{proof}

\begin{lemma}\label{lem-quot}
A surjective semigroup morphism $f:S\to S'$ preserves complete regularity. 
In particular, if $S$ is completely regular then also $S'$ is completely regular.
\end{lemma}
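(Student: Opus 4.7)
The plan is to observe that complete regularity of a single element is an equational property — the existence of an $x$ satisfying $a = axa$ together with $ax = xa$ — and any such property is automatically preserved by any semigroup morphism, surjective or not. Surjectivity will only be used to upgrade from "preserves complete regularity element by element" to "the target is completely regular".

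First I would take a completely regular $a \in S$ with witness $x \in S$, so $a = axa$ and $ax = xa$. Applying the morphism $f$ and using multiplicativity gives
\[
f(a) \;=\; f(axa) \;=\; f(a)\,f(x)\,f(a), \qquad f(a)\,f(x) \;=\; f(ax) \;=\; f(xa) \;=\; f(x)\,f(a),
\]
so $f(x) \in S'$ is a witness for complete regularity of $f(a)$. This establishes the general claim that semigroup morphisms send completely regular elements to completely regular elements; surjectivity is not required here.

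For the "in particular" clause, surjectivity enters in a single line: if $S$ is completely regular then every $a' \in S'$ can be written as $a' = f(a)$ for some $a \in S$, and since $a$ is completely regular by hypothesis, the previous paragraph shows that $a'$ is completely regular. As $a'$ was arbitrary, $S'$ is completely regular.

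There is no genuine obstacle. One could also give the argument via Lemma~\ref{lem-union}: the image of a subgroup of $S$ under a semigroup morphism is again a group, so if $S$ is a union of groups then $f(S) = S'$ is a union of groups and hence completely regular.
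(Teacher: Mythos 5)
Your argument is correct and is essentially the paper's own proof spelled out: the paper also notes that $f$ preserves the equations defining a (commuting) generalised inverse, so $f(x)$ witnesses complete regularity of $f(a)$, with surjectivity used only to cover all of $S'$. Nothing to add.
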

\begin{proof}
$f$ preserves the algebraic properties defining the normal inverse of an element.
 If $a^{-1}$ is a normal inverse of $a$ then $f(a^{-1})$ is a normal inverse of $f(a)$. 
 \end{proof}

%%%%%%%%%%%%%%%%%%%%%%%%%%
\subsection{Simplicity and matrix semigroups}
%%%%%%%%%%%%%%%%%%%%%%%%%%
Let $G$ be a group, let  $I$ and $\Lambda$ be  non-empty sets, and  let   
$A = (a_{\lambda i})_{i\in I,\lambda\in \Lambda}$ be a  $\Lambda\times I$ matrix with entries from $G$. Then the  {\em matrix semigroup} $M[G;I,\Lambda;A]$ 
 is the set $I\times G \times  \Lambda$ together with the multiplication
\begin{equation} \label{Rees-form}(i,g,\lambda)(j,h,\mu) = (i, g a_{\lambda  j} h,\mu).\end{equation}
The idempotents of $M[G;I,\Lambda;A]$ 
are $ (i,a_{\lambda i}^{-1},\lambda)$, $(i,\lambda)\in I\times \Lambda$, and they are all minimal.  The minimal left ideals are the sets $I\times G\times\{\lambda\}$
and thus in one to one correspondance with $\Lambda$ and its minimal right ideals are of the form
$\{i\}\times G\times \Lambda$
and thus in one to one correspondance with $I$. In particular, $M[G;I,\Lambda;A]$ is the union of its minimal left ideals, and the union of its minimal right ideals. 
Furthermore, the sets $\{i\}\times G\times \{\lambda\}$, $i\in I$, $\lambda\in\Lambda$ are subsemigroups which are groups with neutral element $(i,a_{\lambda i}^{-1},\lambda)$. 
Each of these groups is isomorphic to $G$ via 
$\{i\}\times G\times \{\lambda\}\ni (i,g,\lambda)\mapsto 
a_{\lambda i}g \in G$. $M[G;I,\Lambda;A]$ is thus a union of isomorphic groups.

There are natural bijections between the idempotents of distinct minimal left ideals, namely $(i,a_{\lambda i}^{-1},\lambda) \leftrightarrow (i,a_{\lambda' i}^{-1},\lambda')$.
This is usually formulated as follows: $p\leftrightarrow q$ iff $pq = q$ and $qp=p$. It is the restriction of the Green's relation $\mathcal R$ to the minimal idempotents.

 A {\em completely simple} semigroup (without zero element\footnote{Recall that no semigroup in this article will contain an element $0$ such that $0a=a0=0$ for all $a$.}) 
is a simple semigroup which contains a minimal 
idempotent. 
Completely simple semigroups are characterized by the following structure theorem.
%%%%%%%%%%%%%%%%%%%%%
\begin{theorem}[Rees-Suskevitch \cite{howie1995fundamentals}]\label{Rees-theorem}
A semigroup %(without zero element) 
is completely simple if and only if it is isomorphic to a matrix semigroup $M[G;I,\Lambda;A]$.
\end{theorem}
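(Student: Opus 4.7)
The plan is to prove both implications separately. The forward direction---every matrix semigroup is completely simple---was largely established in the discussion preceding the theorem. The minimal idempotents $(i,a_{\lambda i}^{-1},\lambda)$ have been exhibited; for simplicity I would directly verify using \eqref{Rees-form} that the two-sided ideal generated by any single element $(i,g,\lambda)$ equals all of $M[G;I,\Lambda;A]$, by choosing suitable multipliers in $G$ on the left and right. Together these give complete simplicity.

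For the converse, assume $S$ is completely simple. I would first assemble structural facts: $S$ is the disjoint union of its minimal left ideals $\{L_\lambda\}_{\lambda\in\Lambda}$ and also of its minimal right ideals $\{R_i\}_{i\in I}$, because each minimal idempotent $p$ generates minimal left and right ideals $Sp$, $pS$ which must exhaust $S$ by simplicity. Each intersection $R_i\cap L_\lambda$ is non-empty (take $ab$ for any $a\in R_i$, $b\in L_\lambda$), is a single $\Hh$-class, and is a group containing a unique idempotent $p_{i\lambda}$. Fix a minimal idempotent $e$, set $G:=H_e=eSe$, and let $i_0,\lambda_0$ be the indices with $e=p_{i_0\lambda_0}$.

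The core of the argument is coordinatisation. I would set $u_i:=p_{i\lambda_0}$ and $v_\lambda:=p_{i_0\lambda}$, so that $u_{i_0}=e=v_{\lambda_0}$, and observe that the compatibility relations $u_ie=u_i$ and $ev_\lambda=v_\lambda$ hold automatically: since $u_i\in L_{\lambda_0}=Se$, right-multiplication by the idempotent $e$ fixes $u_i$, and similarly on the left. Then I claim that $\Phi:I\times G\times\Lambda\to S$, $\Phi(i,g,\lambda):=u_igv_\lambda$, is a bijection. For surjectivity, any $s\in R_i\cap L_\lambda$ satisfies $u_is=s=sv_\lambda$ (since $R_i=u_iS^1$, $L_\lambda=S^1v_\lambda$, together with idempotency of $u_i,v_\lambda$), hence $s=u_i(ese)v_\lambda$ with $ese\in eSe=G$. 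For injectivity, if $u_igv_\lambda=u_ig'v_\lambda$, then sandwiching with $e$ on both sides yields $(eu_i)g(v_\lambda e)=(eu_i)g'(v_\lambda e)$ as an identity in the group $G$, whence $g=g'$.

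Finally, defining the sandwich matrix by $a_{\lambda i}:=v_\lambda u_i\in R_{i_0}\cap L_{\lambda_0}=G$, a direct computation yields
\[
\Phi(i,g,\lambda)\Phi(j,h,\mu)=u_ig(v_\lambda u_j)hv_\mu=u_i(g\,a_{\lambda j}\,h)v_\mu=\Phi(i,g\,a_{\lambda j}\,h,\mu),
\]
matching \eqref{Rees-form}, so $\Phi$ is the required isomorphism. I expect the main obstacle to lie in the structural setup rather than the concluding computation: in particular, verifying that each $R_i\cap L_\lambda$ is a single $\Hh$-class which is a group (this uses the rectangular $\Hh$-class structure of completely simple semigroups together with the fact that each such class contains a minimal idempotent), and carefully justifying surjectivity of $\Phi$ via the fixing-relations $u_is=s$, $sv_\lambda=s$ for $s\in R_i\cap L_\lambda$.
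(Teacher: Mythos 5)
Your coordinatisation argument is correct and is precisely the classical Rees proof given in the cited reference \cite{howie1995fundamentals}; the paper itself supplies no proof of this theorem, deferring entirely to that source. The two structural facts you defer (that $S$ is the disjoint union of its minimal left ideals and of its minimal right ideals, and that each $R_i\cap L_\lambda$ is a group) are indeed the only nontrivial inputs, and the bijectivity and sandwich-matrix computations you give are exactly right.
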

%%%%%%%%%%%%%%%%%%%%%
Whereas the choice of $I$ and $\Lambda$ are canonical, namely $I$ is the set of $\Rr$-classes and $\Lambda$ the set of $\Ll$-classes of the semigroup, and for $G$ we may take the $\Hh$-class of a minimal idempotent (they are all isomorphic), the choice of the matrix
$A$ has some arbitrariness. 
But $A$ can be normalised
in such a way that the entries of one of its rows and one of its columns are all equal to     
the neutral element of $G$.

\begin{cor}\label{cor-ns}
Nearly simple monoids are completely regular if their kernel contains a minimal idempotent. 
\end{cor}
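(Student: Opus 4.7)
The plan is to decompose $S$ as a union of two completely regular subsemigroups and invoke Corollary \ref{cor-union}. Since $S$ is nearly simple, every element of $S$ is either invertible or belongs to the kernel $\M$. These two subsets cover $S$, and each is a subsemigroup (the invertible elements form the group of units, and $\M$ is a two-sided ideal).

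First I would dispatch the group of units: it is trivially completely regular since every element of a group has a (normal) inverse, so by Lemma \ref{lem-union} it is a union of groups. Next, I would handle the kernel. The kernel $\M$ of any semigroup is simple (as noted at the beginning of the background section), and by hypothesis $\M$ contains a minimal idempotent. Hence $\M$ is completely simple in the sense defined before Theorem \ref{Rees-theorem}, and the Rees–Suskevitch theorem identifies $\M$ with a matrix semigroup $M[G;I,\Lambda;A]$. The discussion following \eqref{Rees-form} shows that any such matrix semigroup decomposes as the disjoint union of the groups $\{i\}\times G\times\{\lambda\}$, so $\M$ is a union of groups and therefore completely regular by Lemma \ref{lem-union}.

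Having written $S$ as the union of the two completely regular subsemigroups (the unit group and $\M$), Corollary \ref{cor-union} directly yields that $S$ is completely regular. No step presents a real obstacle here; the only thing to be careful about is checking that the two pieces are genuinely subsemigroups and that they exhaust $S$, both of which follow immediately from the definition of nearly simple monoid.
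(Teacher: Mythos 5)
Your proof is correct and follows essentially the same route as the paper's: write the nearly simple monoid as the union of its group of units and its kernel, use the minimal idempotent plus the Rees--Suskevitch theorem to see the kernel is a union of groups, and conclude via the characterisation of completely regular semigroups as unions of groups. The only cosmetic difference is that you route the final step through Corollary~\ref{cor-union} rather than applying Lemma~\ref{lem-union} directly, which changes nothing of substance.
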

\begin{proof}
A nearly simple monoid is the union of the group of its invertible elements with its kernel.
If the kernel contains a minimal idempotent then, by Theorem~\ref{Rees-theorem}, it is a unions of groups. 
Now the claim follows from Lemma~\ref{lem-union}.
\end{proof}

A further consequence of the structure theorem is that any 
completely 
simple monoid is a group. To see this we first note that $\Hh_1$ is a group, as $1$ is completely regular. 
Now if $S$ is completely simple then, by the structure theorem, all idempotents of $S$ are minimal. Hence $1$ is a minimal idempotent. 
%$S$ is a disjoint union of groups $\Hh_p$, where $p$ are the idempotents of $S$. 
Since $1$ lies above all idempotents it is the only idempotent in $S$ and hence $S=\Hh_1$.

We later need the following result.
\begin{lemma}\label{lem-Rees-left}
Let $S$ be a {completely } simple semigroup which is the union $S=S_1\cup S_2$ of two left simple subsemigroups $S_1$, $S_2$. Then either $S$ is left simple or the union is disjoint and $S_1$ and $S_2$ are the minimal left ideals of $S$.
\end{lemma}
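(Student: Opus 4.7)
The plan is to use the Rees–Suskevitch structure theorem (Theorem~\ref{Rees-theorem}) to identify $S$ with a matrix semigroup $M[G;I,\Lambda;A]$ and then to track, for each $S_k$, which minimal left ideals of $S$ it meets.

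First, I would record the two facts about $S = M[G;I,\Lambda;A]$ that do all the work. Using the multiplication rule \eqref{Rees-form}, for any $a=(i_0,g_0,\lambda_0)\in S$ a direct computation gives
\[
  Sa \;=\; I\times G \times\{\lambda_0\} \;=\; L_{\lambda_0},
\]
so that $Sa$ is precisely the minimal left ideal of $S$ containing $a$. Consequently the minimal left ideals $\{L_\lambda\}_{\lambda\in\Lambda}$ partition $S$, and every left ideal of $S$ contains some $L_\lambda$.

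Next I would show that each $S_k$ is contained in a single minimal left ideal of $S$. Fix $a\in S_1$. Because $S_1$ is left simple, $S_1 a$ is a left ideal of $S_1$ and hence equals $S_1$; therefore
\[
  S_1 \;=\; S_1 a \;\subset\; S a \;=\; L_{\lambda_0},
\]
where $\lambda_0$ is the third coordinate of $a$. Applying this to any other $a'\in S_1$ forces its third coordinate to coincide with $\lambda_0$, so there is a single $\lambda_1\in\Lambda$ with $S_1\subset L_{\lambda_1}$. The same argument gives $\lambda_2\in\Lambda$ with $S_2\subset L_{\lambda_2}$.

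Finally I would split on whether $\lambda_1=\lambda_2$. If $\lambda_1=\lambda_2$, then $S=S_1\cup S_2\subset L_{\lambda_1}\subset S$, so $S=L_{\lambda_1}$ has no proper left ideals, i.e.\ is left simple. If $\lambda_1\neq\lambda_2$, then $L_{\lambda_1}\cap L_{\lambda_2}=\emptyset$, which already forces $S_1\cap S_2\subset L_{\lambda_1}\cap L_{\lambda_2}=\emptyset$, so the union is disjoint. Moreover $L_{\lambda_1}\subset S=S_1\cup S_2$, and since $L_{\lambda_1}$ is disjoint from $L_{\lambda_2}\supset S_2$, one gets $L_{\lambda_1}\subset S_1$, hence $S_1=L_{\lambda_1}$; symmetrically $S_2=L_{\lambda_2}$. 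The same disjointness shows $\Lambda=\{\lambda_1,\lambda_2\}$, so $S_1$ and $S_2$ are exactly the minimal left ideals of $S$.

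The main obstacle is really the first step: pinning each left-simple subsemigroup inside one minimal left ideal of the ambient $S$. Once the identity $Sa=L_{\lambda_0}$ from the Rees form is in hand, the trick $S_1=S_1 a\subset Sa$ does it in one line, and the rest is the disjoint-union bookkeeping above.
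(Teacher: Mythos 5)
Your proof is correct and follows essentially the same route as the paper: pass to the Rees matrix form $M[G;I,\Lambda;A]$, show each left simple subsemigroup $S_k$ sits inside a single minimal left ideal $I\times G\times\{\lambda_k\}$, and then split on whether $\lambda_1=\lambda_2$. The only cosmetic difference is how you pin $S_k$ into one $L_\lambda$ (via $S_1=S_1a\subset Sa=L_{\lambda_0}$ rather than observing that each nonempty $S_k\cap L_\lambda$ is a left ideal of $S_k$), which is an equivalent use of left simplicity.
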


\begin{proof}
Let $S'$ be a left-simple subsemigroup of $S=M[G;I,\Lambda;A]$. Then either $S'\cap  I\times G\times\{\lambda\}$ is empty, or it is a left ideal of $S'$. Since $S'$ is left simple there must therefore exist a $\lambda\in\Lambda$ such that $S'\subset I\times G\times \{\lambda\}$. So by our assumptions there are $\lambda_1$ and $\lambda_2\in \Lambda$ such that $\Lambda=\{\lambda_1,\lambda_2\}$ and $S_k\subset I\times G\times \{\lambda_k\}$. If $\lambda_1=\lambda_2$ then $\Lambda$ is a single point and hence $S$ is left simple.  If $\lambda_1\neq\lambda_2$ then since $S=S_1\cup S_2$ we must have $S_k = I\times G\times \{\lambda_k\}$ and so $S_1$ and $S_2$ are the minimal left ideals of $S$. 
\end{proof}
%\newpage

\section{Ellis semigroups and complete regularity}

\subsection{Ellis semigroup of a dynamical system}
Let $X$ be a compact metrizable space. 
%The set of functions $X\to X$ is a semigroup under composition. This semigroup has an identity.
We equip $X^X$, the semigroup of functions $X\to X$,
 with the topology of pointwise convergence, this is the same as the product topology. Right multiplication with an element $f\in X^X$,
$\rho_f:X^X\to X^X$, $\rho_f(g) = gf$ is continuous, but, if $X$ is infinite, not left multiplication. A semigroup with these properties is called right-topological.

Let $T$ be a semigroup with an action $\alpha$ on $X$ by continuous surjective maps.
We refer to such a triple $(X,\alpha,T)$ as a dynamical system.
For each $t\in T$, $\alpha^t$ is an element of  $X^X$ and\footnote{We use additive notation as we are mainly interested in abelian $T$.} 
$\alpha^{s+t} = \alpha^s\circ\alpha^t$. We suppose that $T$ has an identity element $0$ so that $\alpha^0=\id$ is the identity map on $X$, which we also denote by $1$.
The Ellis semigroup $E(X,T)$ is the (topological) closure of $\{\alpha^t:t\in T\}$ in $X^X$. 
This closure is also closed under composition of functions. $E(X,T)$ is thus a compact right-topological monoid.

We summarize the implications of compactness of a right topological semigroup which we need in the following theorem, which can for instance be found in \cite{hindman}. 
%%%%%%%%%%%%%%%%%%%%%%%%%%%%%
\begin{theorem}\label{thm-Rees-top}
A compact right topological semigroup $S$ admits a kernel $\M$ and this kernel contains all minimal idempotents. $\M$ is the disjoint union of the minimal left ideals of $S$. 
Each minimal left ideal is compact and contains an idempotent.
\end{theorem}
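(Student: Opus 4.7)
The plan is to exploit the two tools the hypotheses provide---compactness of $S$ and continuity of each right-multiplication map $\rho_a$---by repeatedly applying Zorn's lemma to families of closed left ideals and closed subsemigroups.

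First I would construct minimal left ideals and show they are compact. For each $a\in S$ the set $Sa=\rho_a(S)$ is a compact left ideal, so closed left ideals exist; Zorn's lemma (with compactness guaranteeing that nested intersections are non-empty) then yields a minimal closed left ideal $L$. For $a\in L$ the closed left ideal $Sa$ is contained in $L$, so $Sa=L$ and in particular $a\in Sa$. If $J\subseteq L$ is any left ideal (not necessarily closed) and $b\in J$, the same reasoning applied to $Sb\subseteq J$ gives $Sb=L\subseteq J$, forcing $J=L$, so $L$ is in fact a minimal left ideal. Two distinct minimal left ideals have intersection a left ideal lying in each and are therefore disjoint.

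Next I would produce an idempotent in each minimal left ideal $L$ by an Ellis--Numakura-type argument. For a fixed $a\in L$, the set $La$ is closed, a left ideal, and contained in $L$, hence equals $L$, so $J_a:=\{s\in L:sa=a\}$ is non-empty; it is also closed (as $\rho_a^{-1}(\{a\})\cap L$) and a subsemigroup. Zorn's lemma applied to non-empty closed subsemigroups of $J_a$ yields a minimal one $K$. For any $x\in K$, the set $Kx$ is a non-empty closed subsemigroup of $K$, so $Kx=K$, and then $\{k\in K:kx=x\}$ is a closed non-empty subsemigroup of $K$ equal to $K$; in particular $x^2=x$, exhibiting an idempotent inside $L$.

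Now let $\M$ be the union of all minimal left ideals. It is a left ideal, and for any minimal left ideal $L$ and any $s\in S$, $Ls$ is a non-empty closed left ideal; any sub-left-ideal of $Ls$ pulls back through $\rho_s$ to a non-empty left ideal of $L$, which must be all of $L$, so $Ls$ is itself a minimal left ideal, making $\M$ a right ideal as well. Minimality among two-sided ideals follows because, for any two-sided ideal $I$ and any minimal left ideal $L$, the product $IL$ is non-empty and lies in $L\cap I$ (using $IL\subseteq SL\subseteq L$ and $IL\subseteq IS\subseteq I$), forcing $L\cap I=L$, and thus $L\subseteq I$. Therefore $\M\subseteq I$ and $\M$ is the kernel.

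Finally, for a minimal idempotent $p$, the left ideal $Sp$ contains a minimal left ideal $L$ with some idempotent $q$, and writing $q=sp$ gives $qp=sp^2=q$. A short calculation using $qpq=(qp)q=q^2=q$ shows that $q':=pqp$ is idempotent, while $pq'=p^2qp=q'$ and $q'p=pqp^2=q'$ say that $q'\leq p$ in the idempotent order. Minimality of $p$ forces $q'=p$, whence $p=pqp=p(qp)=pq\in SL\subseteq L\subseteq \M$. The main conceptual obstacle throughout is the asymmetry of the hypotheses: only the right multiplications are continuous, which is why every Zorn's-lemma step must be applied to closed \emph{left} ideals or closed subsemigroups, and every key reduction must be arranged so that the continuous maps $\rho_a$ are the ones being invoked.
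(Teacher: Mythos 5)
Your proof is correct and is essentially the standard argument (the Ellis--Numakura idempotent lemma combined with the usual left-ideal structure theory for compact right topological semigroups) that the paper itself omits, citing Hindman--Strauss instead. The only step you leave implicit is that an \emph{arbitrary} minimal left ideal $L$ (not just one produced by Zorn's lemma among closed left ideals) is compact; this follows from your own observation, since for any $a\in L$ the compact left ideal $Sa=\rho_a(S)$ is contained in $L$ and minimality forces $L=Sa$.
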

Hence Ellis semigroups are monoids which admit a kernel, and this kernel is isomorphic to a matrix semigroup. Combining this with Cor.~\ref{cor-ns} we see that a nearly simple Ellis semigroup is completely regular. We denote the kernel of $E(X,T)$ by $\M(X,T)$. 

If the identity is the only idempotent of $E(X,T)$ then $E(X,T)$ is a group,
because by the above $\id$ is then in the kernel and a completely simple semigroup with a single idempotent is a group. 
Thus $E(X,T)$ is a group if and only if it contains besides $\id$ no other idempotent.

We recall three fundamental results. Proofs for the case $T=\Z$ can be found in \cite{Auslander}. For the benefit of the reader carry them out for general $T$. While the proof of the first result comes close to a topological tautology, the other proofs are purely algebraic and more or less direct consequences of the Rees structure theorem. 
\begin{theorem} \label{thm-fund}
Let $(X,T)$ be a dynamical system.
\begin{enumerate}
\item Two points $x,y\in X$ are proximal if and only if there is a function $f\in E(X,T)$ such that $f(x)=f(y)$. In particular, for any idempotent $p$, $p(x)$ is proximal to $x$.
\item Proximality is a transitive relation if and only if $E(X,T)$ has a unique minimal left ideal. 
\item
$(X,T)$ is distal if and only if $E(X,T)$ is a group.
% (hence contains besides $\id$ no other idempotent).
\end{enumerate}
\end{theorem}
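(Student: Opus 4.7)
The theorem has three parts: (1) is essentially a topological tautology, while (2) and (3) are algebraic consequences of the Rees structure of the kernel $\M=\M(X,T)$. For (1), my plan is a compactness argument in $X^X$. Given a proximal pair $(x,y)$, I would choose a net $t_\nu\in T$ with $d(\alpha^{t_\nu}(x),\alpha^{t_\nu}(y))\to 0$ and extract a convergent subnet $\alpha^{t_\nu}\to f$ in $E(X,T)$, whose pointwise convergence forces $f(x)=f(y)$. The converse runs the same argument in reverse. The parenthetical is immediate from $p\circ p=p$ by taking $f=p$.

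For the backward direction of (2), I would fix an idempotent $u$ in the unique minimal left ideal $L$ (which exists by Theorem~\ref{thm-Rees-top}) and show that $x$ and $y$ are proximal if and only if $u(x)=u(y)$, which is manifestly an equivalence relation. The nontrivial direction uses that for any $f\in E$ equalizing $x$ and $y$, the minimal left ideal $Lf$ coincides with $L$ by uniqueness, so $u=\ell f$ with $\ell\in L$ and $u(x)=\ell(f(x))=\ell(f(y))=u(y)$.

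The forward direction of (2) is the step I expect to require the most care. Assuming proximality is transitive and, for contradiction, that there are two distinct minimal left ideals $L_1,L_2$, my plan is to pick idempotents $u_1\in L_1$, $u_2\in L_2$ in the same $\Rr$-class (possible by the Rees structure), yielding the identities $u_1u_2=u_2$ and $u_2u_1=u_1$. These force $\im u_1=\im u_2=:Y$ and $u_i|_Y=\id_Y$. The key observation is that proximality is trivial on $Y$: if $y,y'\in Y$ are proximal, part (1) produces $g\in E$ with $g(y)=g(y')$, and taking $g=u_1$ gives $y=y'$. For any $x\in X$, the points $u_1(x)$ and $u_2(x)$ lie in $Y$ and are both proximal to $x$; transitivity makes them proximal to each other, and triviality of proximality on $Y$ then forces $u_1(x)=u_2(x)$. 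So $u_1=u_2$ as elements of $E(X,T)$, contradicting their lying in distinct $\Ll$-classes.

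For (3), the backward direction is direct: if $E(X,T)$ is a group then each $f\in E$ is injective, so by (1) proximality is trivial. For the forward direction, distality and (1) give that every $f\in E$ is injective; in particular $p\circ p=p$ forces any idempotent $p$ to equal $1$. Hence $1$ is the only idempotent, and by Theorem~\ref{thm-Rees-top} it lies in $\M$. Since $\M$ is a two-sided ideal containing $1$, one obtains $\M=E(X,T)$, making $E(X,T)$ a completely simple monoid and therefore a group by the observation following Theorem~\ref{Rees-theorem}.
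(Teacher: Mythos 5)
Parts (1) and (3) of your proposal are correct and essentially coincide with the paper's arguments, and your backward direction of (2) (via $Lf=L$ and $u=\ell f$) is also sound. The genuine problem is in the forward direction of (2), at precisely the step you flagged. You justify the key claim ``proximality is trivial on $Y=\im u_1=\im u_2$'' by saying that part (1) produces $g\in E$ with $g(y)=g(y')$ ``and taking $g=u_1$ gives $y=y'$.'' Part (1) is an existential statement: it hands you \emph{some} $g$ equalizing the proximal pair, and you are not entitled to instantiate it with $u_1$. In general a proximal pair is equalized by every element of \emph{some} minimal left ideal but not by idempotents from other minimal left ideals: for a minimal idempotent $p$ the pair $(x,p(x))$ is proximal, yet an idempotent $q$ equalizes it for all $x$ only if $qp=q$, which by the Rees product forces $q$ into the same $\Ll$-class as $p$. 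So as written the inference is invalid, and the claim that carries the whole contradiction is left unproved.

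The claim is nevertheless true, and the repair is exactly the move the paper makes with its element $r$. Given proximal $y,y'\in Y$, take $f\in E$ with $f(y)=f(y')$; then every $g$ in the left ideal $\M f$ satisfies $g(y)=g(y')$, and $\M f$ contains a minimal left ideal $L$ of $E$. By the Rees structure theorem $L$ meets the minimal right ideal of $u_1$ in an idempotent $v$ with $v\,\Rr\,u_1$, so $vu_1=u_1$, i.e.\ $v$ restricts to the identity on $Y=\im u_1$; hence $y=v(y)=v(y')=y'$. With this inserted your argument closes as intended: $u_1(x)$ and $u_2(x)$ are proximal to each other by transitivity, hence equal, so $u_1=u_2$, contradicting the disjointness of distinct minimal left ideals. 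Note that after the repair your route reduces to the paper's own (which shows directly that $\Rr$-related minimal idempotents satisfy $p(x)=rp(x)=rq(x)=q(x)$ for a suitable $r$); the detour through the common image $Y$ does not avoid the need for that choice of idempotent.
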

\begin{proof}${}$
(1) Let $\inf_{t\in T} d(\alpha^t(x),\alpha^t(y))=0$. Then there is a net $(t_\nu)$ such that 
$\lim_{\nu} d(\alpha^{t_\nu} (x),\alpha^{t_\nu}(y))=0$. By compactness $(\alpha^{t_\nu})$ admits a converging subnet whose limit is a function $f\in E(X,T)$. It follows from the continuity of $d$ that $d(f(x),f(y))=0$. 
As for the converse, any $f\in E(X,T)$ is the limit of a net $(\alpha^{t_\nu})$ which converges pointwise. By continuity of the metric we then have $\inf_{t\in T} d(\alpha^t(x),\alpha^t(y))\leq 
d(f(x),f(y))$ so that $f(x)=f(y)$ implies $\inf_{t\in T} d(\alpha^t(x),\alpha^t(y))=0$.

(2) Let $x$ and $ y$ be proximal. Then there exists $f\in E(X,T)$ such that $f(x)=f(y)$. 
%It follows that $\{f\in E: f(x)=f(y)\}$ is a left ideal. This left ideal contains 
All $g\in L:=\M(X,T) f$ satisfy $g(x)=g(y)$. $L$ 
is a left ideal of the kernel $\M(X,T)$ and hence, by Thm.~\ref{thm-Rees-top}, a minimal left ideal of $E$. Thus $x$ and $ y$ are proximal if and only if there is a minimal left ideal $L$ such that $f(x)=f(y)$ for all $f\in L$. 

Suppose there is a unique minimal left ideal $L$. Then $x$ being proximal to $y$ is equivalent to
$f(x)=f(y)$ for all $f\in L$. The latter defines, of course, a transitive relation.

If proximality is transitive then, given $x\in X$, and any two idempotents $p$, $q$, 
$p(x)$ and $q(x)$, being each proximal to x, are proximal to each other. Hence, given $x$, there is a minimal left ideal $L$ such that for all its idempotents $r\in L$ we have $rp(x) =rq(x)$. 
Now suppose that $p$ and $q$ belong to the same  minimal right ideal and choose $r$ to be in that same minimal right ideal. This is possible by the Rees structure theorem as any minimal left ideal intersects any minimal right ideal. 
Then we get $p(x) = rp(x) = rq(x) = q(x)$ (by product rule of idempotents in minimal right ideals). 
We thus have shown that, for all $x$ and any pair of minimal idempotents in the same right ideal, $p(x)=q(x)$. Hence minimal right ideals have a single idempotent. Hence there is only one minimal left ideal.

(3)
Suppose that proximality is non-trivial in the sense that there are $x\neq y$ which are proximal. Then there is $f\in E(X,T)$ which has the same image on $x$ and $y$ and hence is not injective. Hence $f$ cannot be invertible and $E(X,T)$ is not a group. As for the converse, suppose that $E(X,T)$ is not a group and thus contains an idempotent $p\neq \id$.
Then there is $x\in X$ with $p(x) \neq x$. As $p(x)$ and $x$ are proximal, proximality is non-trivial.
\end{proof}
It should be kept in mind that when $E(X,T)$ is a group this means automatically that it is a group of bijections of $X$. On the other hand, when we consider subsemigroups of $E(X,T)$ which are groups, then these need not to contain $\id$, but their neutral element only needs to be an idempotent $p$ of $E(X,T)$. This subgroup will then not contain bijections of $X$ but only maps whose restrictions on $p(X)$ are bijective. A non-invertible function can thus still belong to a subgroup of $E(X,T)$.

\subsection{Criteria for complete regularity}

\begin{cor} Let $(X,\alpha,T)$ be a dynamical system and 
$T = T_1\cup \cdots\cup T_k$ a decomposition into a finite union of subsemigroups. 
If all $E(X,T_i)$ 
are completely regular then $E(X,T)$ is completely regular.
\end{cor}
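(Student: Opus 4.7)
The plan is to reduce the corollary to a direct application of Corollary~\ref{cor-union}, after first identifying $E(X,T)$ with the union $\bigcup_i E(X,T_i)$ inside $X^X$.

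First I would unwind the definitions. Since $T=T_1\cup\cdots\cup T_k$, we have the set-theoretic identity
$$\{\alpha^t:t\in T\}=\bigcup_{i=1}^k\{\alpha^t:t\in T_i\}$$
in $X^X$. The Ellis semigroup $E(X,T)$ is, by definition, the closure of the left-hand side in $X^X$ with the product topology. Because the closure of a finite union of sets equals the union of their closures, this gives
$$E(X,T)=\overline{\bigcup_{i=1}^k\{\alpha^t:t\in T_i\}}=\bigcup_{i=1}^k\overline{\{\alpha^t:t\in T_i\}}=\bigcup_{i=1}^k E(X,T_i).$$
Each $T_i$ being a subsemigroup of $T$ ensures that each $E(X,T_i)$ is in turn a subsemigroup of $E(X,T)$ (the closure of a subsemigroup of a right-topological semigroup is a subsemigroup, since right multiplication is continuous).

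Next, by hypothesis each $E(X,T_i)$ is completely regular. Hence $E(X,T)$ is a union of completely regular subsemigroups, and Corollary~\ref{cor-union} directly yields that $E(X,T)$ itself is completely regular.

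There is essentially no obstacle here: the only non-trivial input is the observation that the closure operation in $X^X$ commutes with finite unions, which is why the hypothesis that the decomposition of $T$ is finite is used. Without finiteness this step would fail, since an infinite union of closed sets need not be closed, and one would no longer be able to reduce the problem to Corollary~\ref{cor-union}.
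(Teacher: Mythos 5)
Your proof is correct and follows exactly the paper's argument: the closure of a finite union is the union of the closures, so $E(X,T)=\bigcup_i E(X,T_i)$, and Corollary~\ref{cor-union} applies. The extra remarks on why each $E(X,T_i)$ is a subsemigroup and why finiteness matters are accurate but not needed beyond what the paper states.
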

\begin{proof} Since the closure of a finite union of subsets of a topological space is the union of their closures
the statement follows from Cor.~\ref{cor-union}.
\end{proof}
A continuous surjection $ \pi:X \to Y$ from a dynamical system $(X,\alpha,T)$ to a dynamical system  $(Y,\beta,T)$ which is equivariant w.r.t.\ the actions, $\pi\circ\alpha^t=\beta^t\circ\pi$, is called a {\em factor map}. One simply says that $(Y,\beta,T)$ is a factor of $(X,\alpha,T)$. 
A factor map $\pi:X\to Y$ induces a surjective morphism of semigroups $\pi_*:E(X,T)\to E(Y,T)$, namely $\pi_*(f)(y) = \pi(f(x))$ where $x$ is a preimage of $y$ under $\pi$ \cite{Auslander,hindman}. The following corollary is thus an immediate consequence of Lemma~\ref{lem-quot}.

\begin{cor}\label{cor-factor}
Suppose that $(Y,\beta,T)$ is a factor of $(X,\alpha,T)$. If  $E(X,T)$ is completely regular then also $E(Y,T)$ is completely regular.
\end{cor}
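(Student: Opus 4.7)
The plan is to apply Lemma~\ref{lem-quot} directly to the canonical surjective morphism $\pi_*:E(X,T)\to E(Y,T)$ induced by the factor map. The heavy lifting of transferring complete regularity through a surjective semigroup morphism has already been carried out in Lemma~\ref{lem-quot}, so the work reduces to exhibiting $\pi_*$ as a surjective semigroup morphism, which, as noted in the paragraph preceding the statement, is standard.

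First I would recall the construction of $\pi_*$. For $f=\lim_\nu \alpha^{t_\nu}\in E(X,T)$, equivariance gives $\pi(\alpha^{t_\nu}(x)) = \beta^{t_\nu}(\pi(x))$, so the pointwise limit $\lim_\nu \beta^{t_\nu}(\pi(x)) = \pi(f(x))$ depends only on $y=\pi(x)$; this defines $\pi_*(f)\in E(Y,T)$. A short check shows that $\pi_*$ is a monoid morphism, and continuity of $\pi_*$ together with the inclusion $\{\beta^t:t\in T\}\subset \pi_*(E(X,T))$ makes the image both compact (hence closed) and dense in $E(Y,T)$, hence equal to it.

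With $\pi_*$ identified as a surjective semigroup morphism, Lemma~\ref{lem-quot} immediately yields that $E(Y,T)$ inherits complete regularity from $E(X,T)$. There is no genuine obstacle here: the corollary is truly a one-line deduction from the quotient property of complete regularity together with the existence of the induced morphism $\pi_*$, and the only care needed is in noting that the verification of surjectivity of $\pi_*$ relies on compactness of $E(X,T)$ together with continuity of $\pi_*$, rather than on any purely algebraic argument.
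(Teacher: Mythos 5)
Your proposal is correct and follows exactly the paper's route: the paper notes in the paragraph preceding the corollary that a factor map induces a surjective semigroup morphism $\pi_*:E(X,T)\to E(Y,T)$ (citing standard references) and then declares the corollary an immediate consequence of Lemma~\ref{lem-quot}. Your additional sketch of why $\pi_*$ is well defined and surjective is accurate but is material the paper delegates to the literature.
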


%%%%%%%%%%%%%%%%%%%%%%%
\subsection{$\Z$ and $\R$-actions}
In this section we will focuss on actions of $\Z$ and $\R$ by homeomorphisms. They can be decomposed into their forward and their backward actions. So $T$ will denote $\Z$ or $\R$, and $T^+$  and $T^-$ their subsemigroups $\Z^+$ and $\Z^-$, or $\R^+$ and $\R^-$. 
Note that the results of the last section apply when inserting $T^+$ or $T^-$ for $T$. 
A fruitful question to ask is what we can say about $E(X,T)$ knowing the structure of $E(X,T^+)$ and $E(X,T^-)$. Questions of this type have also been analysed in \cite{AkinAuslanderGlasner}[Theorem~8.1] for $T=\Z$ using the formalism of Ellis actions.  
To simplify the notation we use also the following abreviations 
$E=E(X,T)$, $\M=\M(X,T)$ and $T=\{\alpha^t | t\in T\}$ and denote by ${}^\pm$ the restriction to the $T^\pm$-action. In particular, $E^+ = E(X,T^+)$ and $\M^+=\M(X,T^+)$. 

\begin{lemma}\label{lem-E1} Let $(X,\alpha,T)$ be a dynamical system with $T=\Z$ or $\R$.
Let $f\in E$, $g\in E^+\backslash T^+$. Then $fg\in E^+$.
\end{lemma}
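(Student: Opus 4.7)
The plan is to reduce to nets $\alpha^{s_\mu} \to g$ with $s_\mu \to +\infty$, and then push the net defining $f$ through $g$ using the appropriate continuity properties of the composition.

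First I would argue that, since $g \in E^+ \setminus T^+$, any net $(s_\mu) \subset T^+$ with $\alpha^{s_\mu} \to g$ in $X^X$ must satisfy $s_\mu \to +\infty$ after passing to a subnet. For $T = \Z$ this is because $\Z^+$ is discrete: a bounded subnet of $(s_\mu)$ would have a constant sub-subnet $s_{\mu_k} = n$, forcing $g = \alpha^n \in T^+$, contradiction. For $T = \R$ the same conclusion holds by invoking the continuity of the action $\R \times X \to X$, so that $t \mapsto \alpha^t \in X^X$ is continuous; any bounded subnet of $(s_\mu)$ would then have a convergent sub-subnet $s_{\mu_k} \to s \geq 0$ with $\alpha^{s_{\mu_k}} \to \alpha^s$, again forcing $g \in T^+$.

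Next, write $f = \lim_\nu \alpha^{t_\nu}$ with $t_\nu \in T$. Since $E$ is right-topological, right multiplication by $g$ is continuous, so
\[ fg = \lim_\nu \alpha^{t_\nu} g. \]
For each fixed $\nu$, the map $\alpha^{t_\nu}$ is a continuous self-map of $X$, so left composition with $\alpha^{t_\nu}$ is continuous on $X^X$ (coordinate-wise this is just pointwise continuity of $\alpha^{t_\nu}$). Therefore
\[ \alpha^{t_\nu} g = \alpha^{t_\nu} \lim_\mu \alpha^{s_\mu} = \lim_\mu \alpha^{t_\nu + s_\mu}. \]
Because $s_\mu \to +\infty$, for every $\nu$ we eventually have $t_\nu + s_\mu \geq 0$, so the net $(\alpha^{t_\nu + s_\mu})_\mu$ lies eventually in $T^+$, and its limit $\alpha^{t_\nu} g$ lies in the closure $E^+$.

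Finally, $E^+$ is closed in $X^X$ by definition, so $fg = \lim_\nu (\alpha^{t_\nu} g) \in E^+$, as required. The only subtle point is step one: one must rule out that $g$ is obtained as a limit from a net that fails to escape to infinity, which is where the hypothesis $g \notin T^+$ is used together with either the discreteness of $\Z$ or the joint continuity of the $\R$-action.
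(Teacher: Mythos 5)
Your proof is correct and follows essentially the same route as the paper's: write $fg=\lim_\nu \alpha^{t_\nu}g$ using right-continuity, compute $\alpha^{t_\nu}g=\lim_\mu\alpha^{t_\nu+s_\mu}$ by continuity of left composition with the continuous map $\alpha^{t_\nu}$, and conclude by closedness of $E^+$. The only difference is that you spell out why $g\in E^+\setminus T^+$ forces $s_\mu\to+\infty$ (via bounded subnets, using discreteness for $\Z$ and continuity of the flow for $\R$), a fact the paper simply asserts here and justifies later in Lemma~\ref{cor-1} by a one-point-compactification argument.
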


\begin{proof}
Let $f\in E$, $g\in E^+\backslash T^+$. So $f = \lim \sigma^{n_\nu}$ and $g = \lim \sigma^{m_\mu}$, however with $m_\mu\to+\infty$. Then $fg = \lim_\nu \sigma^{n_\nu} g$.
Since $\sigma^{n_\nu} g = \lim_{\mu}  \sigma^{n_\nu+m_\mu} \in E^+$ and $E^+$ is closed 
we have $fg\in E^+$.
\end{proof}
%%%%%%%%%%%%%%%%%%%%%%%%%%%%%%%%%%%%%%%
\begin{prop}\label{prop-plus}
Any minimal left ideal $L^+$ of $E^+$ is a minimal left ideal of $E$. In particular, if $E^+$ is a group then $E=E^+$.
\end{prop}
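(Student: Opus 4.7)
The plan is to exploit the structural fact that, by Theorem~\ref{thm-Rees-top}, $L^+$ contains an idempotent $p$ and is therefore equal to $E^+ p$. Almost all of the work will go into promoting $L^+$ from a left ideal of $E^+$ to a left ideal of $E$; minimality in $E$ will then follow from $E^+ \subseteq E$ in one line. The "in particular" assertion will drop out of the degenerate case $p = \id$.

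Two preliminary observations carry most of the weight. First, every $g \in L^+$ satisfies $gp = g$: writing $g = hp$ with $h \in E^+$ gives $gp = hp^2 = hp = g$. Second, as soon as $p \neq \id$, we have $p \in E^+ \setminus T^+$: if $p = \alpha^s$ with $p^2 = p$, then $\alpha^{2s} = \alpha^s$, and composing with $\alpha^{-s}$ (available because $\alpha$ is an action by homeomorphisms) forces $\alpha^s = \id$, contradicting $p \neq \id$.

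In the principal case $p \neq \id$, I take $f \in E$ and $g \in L^+$ and rewrite $fg = f(gp) = (fg)p$. Applying Lemma~\ref{lem-E1} to $fg \in E$ and $p \in E^+ \setminus T^+$ yields $(fg)p \in E^+$, hence $fg \in E^+$. Then $fg = (fg)p \in E^+ p = L^+$, so $L^+$ is a left ideal of $E$. For minimality, suppose $L$ is a left ideal of $E$ with $L \subseteq L^+$; then $E^+ L \subseteq EL \subseteq L$, so $L$ is a (non-empty) left ideal of $E^+$ inside $L^+$, and the minimality of $L^+$ in $E^+$ forces $L = L^+$.

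The remaining case $p = \id$ occurs exactly when $L^+ = E^+$, i.e.\ $E^+$ is left simple; since $E^+$ is a monoid, a short argument (every element has a left inverse, which then must be a two-sided inverse) makes it a group. This coincides with the hypothesis of the "in particular" part. Since $E^+$ is then a group of functions on $X$, for each $t \in T^+$ the group inverse of $\alpha^t$ in $E^+$ must equal the functional inverse $\alpha^{-t}$, so $T \subseteq E^+$; taking closures in $X^X$ gives $E \subseteq E^+$, hence $E = E^+$, and $L^+ = E$ is trivially a minimal left ideal of $E$. The only non-routine step I foresee is making sure Lemma~\ref{lem-E1} is applicable, which is precisely why the case split on whether $p$ equals the identity is forced; everything else is bookkeeping with $L^+ = E^+ p$ and $gp = g$.
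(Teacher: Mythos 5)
Your proof is correct and its core coincides with the paper's: both choose an idempotent $p\in L^+$, observe that $p\in E^+\setminus T^+$ once $p\neq\id$ (using that the action is by homeomorphisms), and apply Lemma~\ref{lem-E1} to conclude $Ep\subseteq E^+p=L^+$ --- your computation $fg=(fg)p\in E^+p$ is exactly the paper's $Ep=Epp\subseteq E^+p=L^+$. You diverge in two local steps, both defensibly cleaner: for minimality you simply note that a left ideal of $E$ contained in $L^+$ is a fortiori a left ideal of $E^+$ and invoke minimality of $L^+$ there, where the paper instead argues through idempotents ($q\leq p$ forces $q=qp\in L^+$, hence $q=p$); and in the group case you produce $\alpha^{-t}\in E^+$ directly as the group inverse of the bijection $\alpha^t$ and take closures to get $E\subseteq E^+$, which sidesteps the paper's unjustified assertion that $T^+$ is not closed in $X^X$ and its ensuing appeal to Lemma~\ref{lem-E1} via some $f\in E^+\setminus T^+$.
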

\begin{proof}
Let $L^+$ be a minimal left ideal of $E^+$.
Suppose first that $E^+$ is a group which means that $L^+$ does not contain an idempotent different from $1$. Then $L^+$ equals $E^+$ and is a group. As $T^+\subset X^X$ is not closed there is an element $f\in E^+\backslash T^+$.  By Lemma~\ref{lem-E1}, $Ef \subset E^+$. But $f$ is invertible in $E^+$ and since $E^+\subset E$, $E$ contains the function $f^{-1}$ which is inverse to $f$. Hence $Ef = E$. Thus we have the chain of semigroup inclusions $E\subset E^+\subset E$ showing that $E^+=E$. 

Now consider the case in which $E^+$ is not a group so that $L^+$ contains an idempotent $p$ which is strictly smaller than $1$.  Then $p\in L^+\backslash T^+$ and hence, by Lemma~\ref{lem-E1}, $Ep = Ep p\subset E^+ p = L^+$.  
Hence $L^+$ is a left ideal of $E$.
To show that it is a minimal left ideal let $q$ be an idempotent smaller than $p$. Then $q = qp$ which implies, by the above that $q\in L^+$. As all idempotents of $L^+$ are minimal we have $q=p$. This shows that $p$ is also minimal in $E$. Hence $L^+=Ep$ 
is a minimal left ideal of $E$. 
\end{proof} 
The following corollary is part of a list of results of \cite{AkinAuslanderGlasner}[Theorem~8.1].
\begin{cor}
$(X,\alpha,T^+)$ is distal if and only if $(X,\alpha,T)$ is distal if and only if  $(X,\alpha,T^-)$ is distal.
\end{cor}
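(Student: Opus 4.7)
The plan is to reduce each of the three distality conditions to the algebraic statement that the corresponding Ellis semigroup---$E$, $E^+$, or $E^-$---is a group, using Theorem~\ref{thm-fund}(3) for $T$ and the $T^\pm$-analogues stated in the introduction. The equivalences then become group-theoretic assertions about $E$, $E^+$, and $E^-$.

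For the implications from one-sided to two-sided distality I would argue as follows. If $E^+$ is a group, Proposition~\ref{prop-plus} applied to the unique minimal left ideal $E^+$ itself immediately yields $E=E^+$, so $E$ is also a group and $(X,\alpha,T)$ is distal. The symmetric statement for $E^-$ is obtained by observing that both Lemma~\ref{lem-E1} and Proposition~\ref{prop-plus} carry over verbatim when $T^+$ is replaced by $T^-$: the only input used is that $T^\pm$ fails to be closed in $X^X$, which holds on either side. So if $E^-$ is a group then likewise $E=E^-$.

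For the converse direction $(X,\alpha,T)$ distal $\Rightarrow$ $(X,\alpha,T^\pm)$ distal, I would work directly at the level of the proximal relations. Since $T^+\subset T$, we have
\[
\inf_{t\in T}d(\alpha^t(x),\alpha^t(y))\le\inf_{t\in T^+}d(\alpha^t(x),\alpha^t(y)),
\]
so forward proximality implies proximality. Therefore if the proximal relation on $(X,\alpha,T)$ is trivial (by Theorem~\ref{thm-fund}(1) applied to the distal system), then the forward proximal relation is also trivial, and the $T^+$-analogue of Theorem~\ref{thm-fund}(3) cited in the introduction gives that $(X,\alpha,T^+)$ is distal. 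The analogous argument, using the obvious inclusion $T^-\subset T$, produces the distality of $(X,\alpha,T^-)$.

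No step presents a serious obstacle: the substantive work has already been absorbed into Proposition~\ref{prop-plus}, and the only point requiring care is to verify that its statement and proof are symmetric in the roles of $T^+$ and $T^-$, which is immediate by inspection.
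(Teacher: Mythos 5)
Your proposal is correct and follows essentially the same route as the paper: the forward implication via Proposition~\ref{prop-plus} (whose ``if $E^+$ is a group then $E=E^+$'' clause is exactly the tool the paper invokes), and the converse via the trivial containment of the forward/backward proximal relations in the proximal relation. Your explicit remark that Lemma~\ref{lem-E1} and Proposition~\ref{prop-plus} hold verbatim with $T^-$ in place of $T^+$ just makes precise the symmetry the paper leaves implicit.
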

\begin{proof}
If $(X,\alpha,T^+)$ is distal then $E^+$ a group. By Prop.~\ref{prop-plus} this implies $E^+=E$ and hence $(X,\alpha,T)$ is distal. This, in turn, means that $(X,\alpha,T^+)$ and $(X,\alpha,T^-)$ are distal. 
\end{proof}
%If $E(X,T^+)$ is a group then $E^+$ is its unique left ideal. By the above $E E^+ = E^+$. Since $\id\in E^+$ this implies $E\subset E^+$. Hence $E^+\subset E\subset E^+$ showing that  $E(X,T)$ is a group and coincides with $E(X,T^+)$. Now if $E(X,T)$ is a group then, since $E(X,T^+)$ contains $\id$ it must be a subgroup of $E(X,T)$.
%%%%%%%%%%%%%%%%%%%%%%%%%%%%%%%%%%%%%%%

\begin{cor}\label{cor-E1} 
Let $(X,\alpha,T)$ be a dynamical system with $T=\Z$ or $\R$. We have
$$\M=\M^+\cup \M^-$$ and $\M^+$ and $\M^-$ are left ideals of $E$. 
\end{cor}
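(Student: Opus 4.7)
The plan is to verify both inclusions in the equality $\M=\M^+\cup\M^-$ and to obtain the left-ideal statement as a by-product of the easy direction.

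For the easy inclusion $\M^+\cup\M^-\subseteq\M$: by Prop.~\ref{prop-plus}, every minimal left ideal of $E^+$ is a minimal left ideal of $E$, hence contained in $\M$. Since $\M^+$ is the union of the minimal left ideals of $E^+$, this gives $\M^+\subseteq\M$; and being a union of left ideals of $E$, $\M^+$ is itself a left ideal of $E$. The same reasoning handles $\M^-$, which already takes care of the second half of the corollary.

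For the reverse inclusion I would fix $a\in\M$ and exploit $E=E^+\cup E^-$ (finite unions commute with topological closure). If $\id\in\M$ then $\M=E$; but a completely simple monoid is a group (as observed after Thm.~\ref{Rees-theorem}), so $E$ is a group and $(X,T)$ is distal. Distality transfers to $(X,T^\pm)$, so $E^\pm$ are groups and Prop.~\ref{prop-plus} gives $E=E^+=E^-$, making $\M=\M^+=\M^-$ trivially. Otherwise $\M$ contains no invertible element, so $a\notin T=T^+\cup T^-$, and we may assume $a\in E^+$, hence $a\in E^+\setminus T^+$.

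Let $L$ be the minimal left ideal of $E$ containing $a$, so $L=Ea$, and let $p\in L$ be an idempotent (Thm.~\ref{thm-Rees-top}). Writing $p=fa$ with $f\in E$, Lemma~\ref{lem-E1} forces $p\in E^+$; and $p\neq\id$ (else $\id\in\M$), so in fact $p\in E^+\setminus T^+$. The idempotent order $q\leq p\Leftrightarrow q=qp=pq$ is intrinsic to the subsemigroup, so minimality of $p$ in $E$ restricts to minimality in $E^+$, and $E^+p\subseteq\M^+$ is a minimal left ideal of $E^+$. Finally, for $x\in L=Ep$ we write $x=gp$; Lemma~\ref{lem-E1} applied with left factor $g$ and $p\in E^+\setminus T^+$ gives $x\in E^+$, and $xp=gp^2=x$ then places $x\in E^+p\subseteq\M^+$. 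Thus $L\subseteq\M^+$, so $a\in\M^+$, completing the proof.

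The main obstacle is precisely the step that lifts the idempotent $p\in L$, \emph{a priori} only an element of $E$, into an element of $E^+$: Lemma~\ref{lem-E1} is custom-built for this, but only applies once one has ensured that the factor to which it is applied lies in $E^+\setminus T^+$, which is why the trivial distal case must be dispatched first.
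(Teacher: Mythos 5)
Your proof is correct and follows essentially the same route as the paper's: dispatch the distal case, use Prop.~\ref{prop-plus} to see that $\M^{\pm}$ are unions of minimal left ideals of $E$ (hence left ideals contained in $\M$), and then show every minimal left ideal of $E$ contains an idempotent lying in $E^{+}\setminus T^{+}$ or $E^{-}\setminus T^{-}$, so that Lemma~\ref{lem-E1} places the whole ideal inside $\M^{+}$ or $\M^{-}$. The only (harmless) variation is that you locate the idempotent via $p=fa$ with $a\in E^{+}\setminus T^{+}$, whereas the paper argues directly that any minimal idempotent of $E$ lies in $E^{+}\cup E^{-}$ and hence in $\M^{+}\cup\M^{-}$.
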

\begin{proof}
If $(X,\alpha,T)$ is distal then all the above kernels are equal to the group $E$ so the statement is evident. Otherwise $1$ is not a minimal idempotent and not 
in $\M^+$ or $\M^-$. As both are unions of minimal ideals (for $E^+$ and $E^-$) Prop.~\ref{prop-plus} shows that they are left ideals of $E$. Moreover, we saw that if $p$ is an idempotent of $\M^+$ then $Ep$ is a minimal left ideal of $E$. As any idempotent must belong to $E^+$ or to $E^-$ and idempotents outside of $\M^+\cup \M^-$ cannot be minimal in $E$, we find that the union $\M^+\cup \M^-$ exhausts all minimal left ideals of $E$. Hence the union must be $\M$.
%
%The inclusion $\M^+\cup \M^-\subset \M$ follows directly from the last proposition, as $\M^+$ is the union of its minimal left ideals. As for the opposite inclusion, 
\end{proof}

Recall that a pair $x,y\in X$ is called proximal if 
$\inf_{t\in T}d(\alpha^t(x),\alpha^t(y)) = 0,$
and forward proximal if 
$$\inf_{t\in T^+}d(\alpha^t(x),\alpha^t(y)) = 0.$$
Replacing $T^+$ by $T^-$ we obtain the corresponding notion of backward proximality.
%Forward proximality is related to the Ellis semigroup $E^+$ of the forward motion. In particular, transitivity of the forward proximal relation is equivalent to the fact that $E^+$.
Note that if proximality is a transitive relation, then also forward and backward proximality are transitive, but the converse need not be true. 

We let $J_{min}$ and $J^\pm_{min}$ denote all minimal idempotents of $E$ and $E^\pm$, respectively. 

\begin{theorem}\label{thm-E1} 
Let $(X,\alpha,T)$ be a dynamical system with $T=\Z$ or $\R$. Suppose that forward proximality and backward proximality are transitive. 
We have the following dichotomy: 
\begin{enumerate}
\item Either one of the following equivalent statements holds
\begin{itemize}
\item[(i)] Proximality is transitive, 
\item[(ii)]  $E$ has a unique minimal left ideal, %namely $\M$,
\item[(iii)] $\M=\M^+=\M^-$, 
\item[(iv)] $J^+_{min} = J^-_{min}$. 
\end{itemize}
\item or one of the following equivalent statements holds
\begin{itemize}
\item[(i)] Proximality is not transitive, 
\item[(ii)] $E$ has exactly two minimals left ideals namely $\M^+$ and $\M^-$, 
\item[(iii)]  $\M^+\cap\M^-=\emptyset$
\item[(iv)] $J^+_{min}\cap J^-_{min}=\emptyset$.
\end{itemize}
\end{enumerate}
\end{theorem}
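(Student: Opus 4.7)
The strategy is to reduce everything to the two facts already available: Theorem~\ref{thm-fund}(2) (unique minimal left ideal $\Leftrightarrow$ transitive proximality), which applies to each of $E$, $E^+$, $E^-$, together with Proposition~\ref{prop-plus} and Corollary~\ref{cor-E1} which relate the $\pm$-kernels to $E$.

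First I would set the stage. By Theorem~\ref{thm-fund}(2) applied to $T^+$ and $T^-$, the transitivity hypothesis says that $E^+$ and $E^-$ each contain a unique minimal left ideal; since the kernel is the union of the minimal left ideals (Theorem~\ref{thm-Rees-top}), these unique minimal left ideals are $\M^+$ and $\M^-$ themselves. Proposition~\ref{prop-plus} then upgrades this: $\M^+$ is a minimal left ideal of $E$ as well, and similarly $\M^-$. Corollary~\ref{cor-E1} gives $\M=\M^+\cup\M^-$, and since $\M$ is the disjoint union of the minimal left ideals of $E$, we conclude that $E$ has either one or exactly two minimal left ideals, according to whether $\M^+=\M^-$ or $\M^+\neq \M^-$. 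In the latter case disjointness of distinct minimal left ideals forces $\M^+\cap\M^-=\emptyset$. This already gives the dichotomy at the level of statement (ii), and equivalences (ii)$\Leftrightarrow$(iii) in both cases.

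Next I would hand off (i)$\Leftrightarrow$(ii) to Theorem~\ref{thm-fund}(2) applied to $E$ itself: proximality is transitive precisely when $E$ has a unique minimal left ideal. Combined with the preceding paragraph this matches case~(1) versus case~(2).

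Finally, for (iii)$\Leftrightarrow$(iv), I would argue at the level of idempotents. The idempotents of $\M^\pm$ are precisely $J^\pm_{\min}$ (minimal idempotents of $E^\pm$), and since $\M^\pm$ is a minimal left ideal of $E$, each of its idempotents is also minimal in $E$. If $\M^+=\M^-$ then the idempotent sets trivially agree, giving $J^+_{\min}=J^-_{\min}$; conversely, if some $p\in J^+_{\min}\cap J^-_{\min}$, then $Ep\subseteq \M^+\cap\M^-$ is nonempty, so the two minimal left ideals $\M^+$ and $\M^-$ meet and hence coincide, giving case~(1). In the complementary situation (case~(2)), $\M^+\cap\M^-=\emptyset$ forces $J^+_{\min}\cap J^-_{\min}=\emptyset$, and conversely disjoint idempotent sets prevent the two minimal left ideals from coinciding.

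The only place where care is needed is the very first reduction: one must remember that under the hypothesis it is $\M^\pm$ itself (and not merely some minimal left ideal inside it) that is a minimal left ideal of $E^\pm$, so that Proposition~\ref{prop-plus} delivers the full statement $\M^\pm$ is minimal in $E$. Once this is in place, everything else is bookkeeping built on Corollary~\ref{cor-E1} and the structure of minimal idempotents inside minimal left ideals.
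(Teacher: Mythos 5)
Your proof is correct, and it follows the same overall skeleton as the paper's: establish $\M=\M^+\cup\M^-$ via Corollary~\ref{cor-E1}, derive the one-versus-two-minimal-left-ideals dichotomy, tie (i) to (ii) by Theorem~\ref{thm-fund}(2) applied to $E$, and settle (iv) by the observation that an idempotent $p\in J^\pm_{min}$ satisfies $Ep\subset\M^\pm$. The one place where you genuinely diverge is the pivot for the dichotomy: the paper views $\M^+$ and $\M^-$ as left simple subsemigroups of the completely simple semigroup $\M$ and invokes Lemma~\ref{lem-Rees-left}, which rests on the Rees matrix coordinates; you instead note that under the transitivity hypothesis $\M^\pm$ is the unique minimal left ideal of $E^\pm$, so Proposition~\ref{prop-plus} makes it a minimal left ideal of $E$ outright, and then the disjointness of distinct minimal left ideals from Theorem~\ref{thm-Rees-top} does the rest. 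Your route is marginally more economical in that it bypasses the Rees structure theorem entirely at this step and makes explicit that the two candidate ideals are themselves minimal in $E$ (which the paper only recovers a posteriori); the paper's route via Lemma~\ref{lem-Rees-left} is slightly more robust in that it would still say something if one only knew $\M^\pm$ to be left simple subsemigroups rather than left ideals. You were also right to flag that Proposition~\ref{prop-plus} is only applicable because the hypothesis forces $\M^\pm$ to be a single minimal left ideal of $E^\pm$, and the application to $\M^-$ tacitly uses the time-reversed version of that proposition, exactly as the paper does in Corollary~\ref{cor-E1}.
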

\begin{proof}
%If $(X,T)$ is distal then we are in the first case and the statements are evident.
By Cor.~\ref{cor-E1}, $\M=\M^+\cup \M^-$ and $\M^+$ and $\M^-$ are left ideals of $E$ hence of $\M$. As both, $\M^+$ and $\M^-$ are left simple by assumption (see Thm.~\ref{thm-fund}) we can  
apply Lemma~\ref{lem-Rees-left} to see that either $\M$ is the disjoint union of 
$\M^-$ with $\M^+$ or $\M$ is left simple. 

Suppose that $\M$ is left simple. By Thm.~\ref{thm-fund} this is equivalent to proximality being transitive. Since $\M^+$ and $\M^-$ are left ideals of $\M$ (by Cor.~\ref{cor-E1}) they must be equal. Therefore also $J^+_{min} = J^-_{min}$.  
We saw above that for $p\in J^+_{min}$ we have $Ep\subset \M^+$. Hence $J^+_{min} = J^-_{min}$ implies $\M^+=\M^-$ which implies that $\M$ is left simple.

Suppose that $\M$ is not left simple. By Lemma~\ref{lem-Rees-left} this is equivalent to $\M^+\cap\M^-=\emptyset$ and implies $J^+_{min}\cap J^-_{min}=\emptyset$. On the other hand, if $p\in J^+_{min}\cap J^-_{min}$ then $Ep\subset \M^+\cap\M^-$, so $\M^+\cap\M^-\neq\emptyset$.
\end{proof}

We recall from the discussion after Theorem~\ref{Rees-theorem} that in the last case, where $E$ has two minimal left ideals, there is a canonical one-to-one correspondence between $J^+_{min}$ and $J^-_{min}$:  
exactly one idempotent from $J^+_{min}$ is $\Rr$-related to exactly one idempotent of 
$J^-_{min}$. This can also be found in \cite{Auslander}.

%%%%%%%%%%%%%%%%%%%%%%%%
\section{Almost distal systems}
%%%%%%%%%%%%%%%%%%%%%%
The notion of almost distal systems was introduced in \cite{Blanchard} for $\Z^+$-actions. We investigate it here for $\Z$ or $\R$ actions. 
In this section $T$ will again denote $\Z$ or $\R$. 
%%%%%%%%%%%%%%%%%%%%%%%
\subsection{Almost distal $T^+$-actions}
%%%%%%%%%%%%%%%%%%%%%%%
\begin{definition} Let $(X,\alpha,T^+)$ be a dynamical system with an action by continuous surjectives maps.
A pair $x,y\in X$ is called proximal if 
$$\inf_{t\in T^+}d(\alpha^t(x),\alpha^t(y)) = 0$$
and asymptotic if 
$$\lim_{t\to +\infty}d(\alpha^t(x),\alpha^t(y)) = 0.$$
$(X,\alpha,T^+)$ is called {\em distal} if proximality implies equality.
$(X,\alpha,T^+)$ is called {\em almost distal} if proximality implies asymptoticity. 
\end{definition}
Note that the asymptoticity relation is always transitive. 

Almost distal $\Z^+$-actions have been studied in \cite{Blanchard}  with the help of the so-called {\em adherence semigroup}. The adherence semigroup of $(X,\alpha,T^+)$ is the subsemigroup $\Aa(X,T^+)\subset E(X,T^+)$ of elements which are obtained as limits of nets $f=\lim \alpha^{t_\nu}$ where $\lim t_\nu = +\infty$. Equivalently, $\Aa(X,T^+)=\bigcap_{t\in T^+} \alpha^t E(X,T^+)$, so it is a closed set. We also denote $\Aa^+=\Aa(X,T^+)$.

\begin{lemma} \label{cor-1}
Consider a dynamical system $(X,\alpha,T^+)$.
If $f\in E^+\backslash T^+$ then 
$f=\lim \alpha^{t_\nu}$ for a net $(t_\nu)$ for which $\lim t_\nu = +\infty$. In particular, 
$E^+\backslash T^+\subset \Aa^+$.
\end{lemma}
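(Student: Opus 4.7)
The plan is to prove the contrapositive via a compactness argument: given any net $(\alpha^{s_\nu})$ with $s_\nu \in T^+$ converging to $f\in E^+$, I will extract a subnet whose time parameters tend to $+\infty$, unless $f$ is already of the form $\alpha^s$ for some $s \in T^+$ (i.e.\ unless $f \in T^+$).

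First I would pass to the one-point compactification $\widetilde{T^+}:=T^+\cup\{+\infty\}$, which is compact for both $T=\Z$ (discrete compactification) and $T=\R$. Since $f\in E^+$ is by definition the limit of some net $(\alpha^{s_\nu})$ with $s_\nu\in T^+$, compactness lets me pass to a subnet, which I still call $(s_\nu)$, converging to some $s^*\in\widetilde{T^+}$; the corresponding subnet of $(\alpha^{s_\nu})$ still converges to $f$ in $X^X$.

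There are two cases. If $s^*=+\infty$ then the subnet $(\alpha^{s_\nu})$ is precisely the net required by the statement, and we conclude $f\in \Aa^+$. If instead $s^*\in T^+$, the aim is to derive the contradiction $f=\alpha^{s^*}\in T^+$. This hinges on continuity of the map $s\mapsto \alpha^s$ from $T^+$ into $X^X$ with the product topology, which by the definition of the product topology reduces to continuity of $s\mapsto \alpha^s(x)$ for each fixed $x\in X$. For $T=\Z$ this is automatic since $\Z^+$ is discrete (and in fact $s_\nu$ is eventually constant equal to $s^*$). For $T=\R$ it is part of the standing definition of a continuous $\R$-action by homeomorphisms on $X$. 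Granted this continuity, $\alpha^{s_\nu}\to\alpha^{s^*}$ along the chosen subnet, and uniqueness of limits in the Hausdorff space $X^X$ gives $f=\alpha^{s^*}\in T^+$, contradicting $f\notin T^+$.

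The inclusion $E^+\setminus T^+\subset \Aa^+$ then follows immediately from the definition of $\Aa^+$. The only delicate ingredient is the continuity of $s\mapsto \alpha^s$ in the $\R$-case; the remainder of the argument is a routine compactness-plus-uniqueness-of-limits argument that does not interact with the algebraic or dynamical content of the paper.
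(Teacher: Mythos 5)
Your argument is correct and is essentially the paper's own proof: pass to the one-point compactification $T^+\cup\{+\infty\}$, extract a convergent subnet of $(t_\nu)$, and note that a finite limit $t$ would force $f=\alpha^t\in T^+$. You are in fact slightly more careful than the paper in making explicit the continuity of $s\mapsto\alpha^s$ needed to conclude $f=\alpha^{s^*}$ in the $\R$-case, which the paper leaves implicit.
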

\begin{proof}
We have $f=\lim \alpha^{t_\nu}$ where $(t_\nu)$ is a net in $T^+$. It is as well a net in 
the one-point compactification $T^+\cup \{+\infty\}$. By compactness of the latter
$(t_\nu)$ has a subnet $(t'_\nu)$ which converges. Also  $(\alpha^{t'_\nu})$ converges to $f$.
If $\lim t'_\nu = t\in T^+$ then $f=\alpha^t \in T^+$. \end{proof}

\begin{lemma}
$\Aa^+$ contains  $\M^+$. 
\end{lemma}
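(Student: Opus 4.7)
The plan is to show that $\mathcal{A}^+$ is a two-sided ideal of $E^+$, at which point the inclusion $\mathcal{M}^+\subseteq\mathcal{A}^+$ follows from the fact recalled in Section~2 that the kernel $\mathcal{M}^+$ is the minimal two-sided ideal of $E^+$ and hence lies inside every two-sided ideal. The characterisation $\mathcal{A}^+=\bigcap_{t\in T^+}\alpha^t E^+$ is the most convenient to work with. Each set $\alpha^t E^+$ is the image of the compact set $E^+$ under left multiplication by $\alpha^t$, which is continuous on $X^X$ because $\alpha^t\colon X\to X$ is continuous (if $f_\nu\to f$ pointwise then $\alpha^t\circ f_\nu(x)=\alpha^t(f_\nu(x))\to\alpha^t(f(x))$ by continuity of $\alpha^t$ at $f(x)$). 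Combined with the monotonicity $\alpha^{s+t}E^+\subseteq\alpha^sE^+\cap\alpha^tE^+$, the family $\{\alpha^t E^+\}_{t\in T^+}$ is a downward-directed collection of compact sets, so $\mathcal{A}^+$ is nonempty. The right-ideal property is then immediate: given $f\in\mathcal{A}^+$ and $g\in E^+$, for each $t$ write $f=\alpha^t h_t$ with $h_t\in E^+$ and note that $fg=\alpha^t(h_t g)\in\alpha^t E^+$.

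The main obstacle is verifying that $\mathcal{A}^+$ is a left ideal, and the key ingredient will be the commutation identity $\alpha^t g = g\alpha^t$ for every $g\in E^+$ and $t\in T^+$. This rests on the abelianness of $T$: writing $g=\lim\alpha^{s_\nu}$ one has $\alpha^{s_\nu}\alpha^t=\alpha^t\alpha^{s_\nu}$ for every $\nu$, and one passes to the pointwise limit on the right-hand side by the right-topological structure (right multiplication by $\alpha^t$ is continuous) and on the left-hand side by the continuity of left multiplication by the continuous map $\alpha^t$ just observed. With this identity in hand, for $g\in E^+$ and $f=\alpha^t h_t\in\mathcal{A}^+$ I get
\[
gf = g\,\alpha^t h_t = \alpha^t\,(g h_t)\in\alpha^t E^+,
\]
valid for every $t\in T^+$, whence $gf\in\mathcal{A}^+$. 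Thus $\mathcal{A}^+$ is a nonempty two-sided ideal of $E^+$, and by minimality $\mathcal{M}^+\subseteq\mathcal{A}^+$.
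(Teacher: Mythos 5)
Your proof is correct. It follows the same overall strategy as the paper---show that $\Aa^+$ is a (nonempty) two-sided ideal of $E^+$ and then invoke the fact that the kernel $\M^+$ is contained in every two-sided ideal---but the verification of the ideal property is organized differently. The paper works with the net description of $\Aa^+$ and splits $E^+$ into $T^+$ and $E^+\setminus T^+$: multiplication by $\alpha^t$ preserves $\Aa^+$ because it merely shifts the exponents of an approximating net, while Lemma~\ref{cor-1} ($E^+\setminus T^+\subset\Aa^+$) reduces the remaining case to the fact that $\Aa^+$ is closed under multiplication. You instead use the characterisation $\Aa^+=\bigcap_{t\in T^+}\alpha^t E^+$ and treat all of $E^+$ uniformly, with the commutation identity $\alpha^t g=g\alpha^t$ doing the work for the left-ideal half; your justification of that identity is the right one (left multiplication by $\alpha^t$ is continuous on $X^X$ because $\alpha^t$ is a continuous self-map of $X$, even though $E^+$ as a whole is only right-topological). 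Your route has the merit of not needing Lemma~\ref{cor-1} and of making explicit both the nonemptiness of $\Aa^+$ and the continuity fact that the paper's ``clearly'' leaves implicit; the paper's route is shorter given that Lemma~\ref{cor-1} is already available.
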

\begin{proof}
Clearly, if $g=\alpha^t\in T^+$ and $f\in\Aa^+$ then $fg=gf\in\Aa^+.$ If $f,g\in\Aa^+$ then $fg,gf\in\Aa^+$ as $\Aa^+$ is closed under multiplication. Hence  $E^+\Aa^+\cap \Aa^+E^+\subset \Aa^+$, by Lemma~\ref{cor-1}, and $\Aa^+$, being a two-sided ideal, must contain the kernel of $E^+$. 
\end{proof}
\begin{lemma} \label{lem-2}
 If $\Aa^+$ is simple then $\Aa^+ = \M^+$ and $E^+$ is nearly simple. If $\Aa^+$ is left simple then $E^+$ is nearly left simple. 
\end{lemma}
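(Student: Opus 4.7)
The plan is to establish $\Aa^+=\M^+$ first and then deduce the near-simplicity conclusion for $E^+$. For the identification $\Aa^+=\M^+$ I would observe that $\M^+\subset\Aa^+$ (by the preceding lemma) and that $\M^+$ is a two-sided ideal of $E^+$. Since $\Aa^+\subset E^+$, this gives $\Aa^+\cdot\M^+\cup\M^+\cdot\Aa^+\subset\M^+$, so $\M^+$ is a non-empty two-sided ideal of the semigroup $\Aa^+$, and simplicity of $\Aa^+$ forces $\M^+=\Aa^+$. For the left-simple statement I would repeat this argument with ``left ideal'' in place of ``two-sided ideal''; this yields both the identification $\M^+=\Aa^+$ and the fact that $\M^+$ is itself left simple.

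For near-simplicity of $E^+$ I would verify the two conditions of the definition. The kernel $\M^+$ is automatically the unique minimal two-sided ideal by Theorem~\ref{thm-Rees-top}, so it only remains to show every non-invertible element of $E^+$ lies in $\M^+$. Given a non-invertible $f\in E^+$, Lemma~\ref{cor-1} immediately places $f$ in $\Aa^+=\M^+$ whenever $f\notin T^+$. The delicate case is $f=\alpha^t\in T^+$: here my plan is to argue by contrapositive that $\alpha^t\notin \M^+$ forces $\alpha^t$ to be invertible in $E^+$. Using $\M^+=\Aa^+=\bigcap_s\alpha^sE^+$, the assumption $\alpha^t\notin\M^+$ provides some $s$ with $\alpha^t\notin \alpha^sE^+$; combining this failure of factorisation with the completely simple (Rees) structure of $\M^+$ from Theorem~\ref{Rees-theorem}, together with a minimal idempotent $p\in\M^+$ for which $\alpha^tp\in\M^+$ admits a group inverse in its $\Hh$-class, I would piece together an element $g\in E^+$ satisfying $\alpha^tg=g\alpha^t=1$.

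For the nearly left simple statement the argument runs in parallel: the left-simple version of the first step identifies $\M^+$ with $\Aa^+$ and shows $\M^+$ is the unique minimal left ideal of $E^+$, while the invertibility analysis for elements of $T^+$ is unchanged.

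The main obstacle I anticipate is precisely the construction of an $E^+$-inverse for $\alpha^t\in T^+\setminus\M^+$. The hypothesis on $\Aa^+$ yields tight control over the asymptotic part $E^+\setminus T^+$ via Lemma~\ref{cor-1}, but says nothing direct about $T^+$ itself, so extracting an inverse requires a careful interplay between the stabilisation of the chain $(\alpha^sE^+)_s$ at $\Aa^+$ and the Green's-class structure inside the kernel $\M^+$.
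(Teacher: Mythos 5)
Your first step coincides with the paper's argument and is correct: $\M^+\subset\Aa^+$ by the preceding lemma, $\M^+$ is a (left, resp.\ two-sided) ideal of $E^+$ and hence of the subsemigroup $\Aa^+$, so (left) simplicity of $\Aa^+$ forces $\Aa^+=\M^+$; Lemma~\ref{cor-1} then gives $E^+=\M^+\cup T^+$, and in the left-simple case $\M^+$ is left simple, hence the unique minimal left ideal.

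The second half is where the proposal goes wrong. You set yourself the task of showing that every non-invertible $\alpha^t\in T^+\setminus\M^+$ cannot exist, i.e.\ that such an $\alpha^t$ always admits an inverse $g\in E^+$ with $\alpha^tg=g\alpha^t=1$, and the obstacle you flag there is not merely delicate --- the statement is false in general, so no amount of Rees-structure bookkeeping inside $\M^+$ will produce $g$. In this subsection the $\alpha^t$ are only assumed to be continuous surjections; if $\alpha^t$ is not injective then $g\alpha^t=\mathrm{id}$ has no solution $g$ in $X^X$ at all. Even for an action by homeomorphisms, $\alpha^{-t}$ need not lie in $E^+=\overline{\{\alpha^s:s\geq 0\}}$: for a Sturmian shift, $\sigma^{-1}\in E(X,\Z^+)$ would put the identity in $\Aa^+=\M^+$ and force $E(X,\Z^+)$ to be a group, which it is not. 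The paper's proof does not attempt this step; it ends with the decomposition $E^+=\M^+\cup T^+$ and reads the conclusion off from it --- this decomposition is exactly how ``nearly (left) simple'' is exploited later (Corollary~\ref{cor-2}, Proposition~\ref{prop-almost-distal}, where in the two-sided setting $T$ genuinely consists of invertible elements of $E$). So you should stop where the paper stops, at $E^+=\M^+\cup T^+$ together with the (left) simplicity of $\M^+$, rather than trying to manufacture inverses for the elements of $T^+$.
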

\begin{proof}
$\M^+$ is an ideal of $\Aa^+$. Hence, if $\Aa^+$ is simple then $\Aa^+ = \M^+$ and, by Lemma~\ref{cor-1}, $E^+=\M^+\cup T^+$. Hence $E^+$ is nearly simple. 

A left simple semigroup is simple. Hence if $\Aa^+$ is left simple then $\Aa^+ = \M^+$ and $E^+=\M^+\cup T^+$, and $\M^+$ is left simple.                    
\end{proof}
The following theorem is proved in \cite{Blanchard} for $T^+=\Z^+$, and the proof given there carries over for $T^+=\R^+$.
\begin{theorem}\label{thm-Blanchard}
$(X,\alpha,T^+)$ is almost distal if and only if its adherence semigroup $\Aa(X,T^+)$ is left simple.
\end{theorem}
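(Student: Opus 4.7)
The plan is to reformulate forward proximality and asymptoticity as algebraic properties of $\Aa^+$, and then read off the two implications directly from these reformulations.

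First I would prove two auxiliary characterizations. (a) A pair $x,y\in X$ is forward proximal if and only if there exists $f\in\Aa^+$ with $f(x)=f(y)$. Theorem~\ref{thm-fund}(1) supplies an $f\in E^+$ with $f(x)=f(y)$; if $f=\alpha^t\in T^+$ then picking any net $s_\nu\to+\infty$ and passing to a convergent subnet of $(\alpha^{t+s_\nu})$ produces an element of $\Aa^+$ still identifying $x$ and $y$, while if $f\notin T^+$ Lemma~\ref{cor-1} already places $f$ in $\Aa^+$. (b) The pair is forward asymptotic if and only if $g(x)=g(y)$ for every $g\in\Aa^+$. The forward direction uses continuity of $d$ on the net defining $g$, together with $t_\nu\to+\infty$; the converse is a standard compactness argument, extracting from any net $t_\nu\to+\infty$ on which $d(\alpha^{t_\nu}(x),\alpha^{t_\nu}(y))\geq\varepsilon$ a subnet converging to some $g\in\Aa^+$ with $d(g(x),g(y))\geq\varepsilon$.

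With (a) and (b) in hand, almost distality is the statement: for all $x,y$, the existence of some $f\in\Aa^+$ with $f(x)=f(y)$ implies $g(x)=g(y)$ for every $g\in\Aa^+$. The implication ``$\Aa^+$ left simple $\Rightarrow$ almost distal'' is then immediate: if $f\in\Aa^+$ identifies $x$ and $y$, then $\Aa^+f=\Aa^+$, so any $g\in\Aa^+$ can be written $g=hf$ and therefore $g(x)=h(f(x))=h(f(y))=g(y)$.

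For the converse, I would pick a minimal idempotent $p\in J^+_{min}\subset\M^+\subset\Aa^+$, which exists by Theorem~\ref{thm-Rees-top}. For every $x\in X$ the point $p(x)$ is proximal to $x$ by Theorem~\ref{thm-fund}(1), hence asymptotic to $x$ by almost distality, hence by (b) every $g\in\Aa^+$ satisfies $g(p(x))=g(x)$. Thus $gp=g$ for all $g\in\Aa^+$, so $p$ is a right identity of $\Aa^+$ and $\Aa^+=\Aa^+p\subset E^+p$. But $E^+p$ is a minimal left ideal of $E^+$, contained in $\M^+\subset\Aa^+$, so the chain $E^+p\subset\Aa^+\subset E^+p$ collapses and $\Aa^+=E^+p$. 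A minimal left ideal of a semigroup is automatically left simple as a semigroup (any left ideal $I$ of $E^+p$ contains $E^+p\cdot m = E^+p$ for any $m\in I$), which gives the desired left simplicity of $\Aa^+$.

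I expect the only delicate point to be the technical step in (a) where the witnessing $f$ happens to lie in $T^+$ rather than in $\Aa^+$; one has to produce an honest element of $\Aa^+$ by postcomposing with the shift and taking a limit. Everything else reduces to Theorem~\ref{thm-fund}, Lemma~\ref{cor-1}, and the standard fact that minimal one-sided ideals are simple.
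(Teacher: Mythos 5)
Your argument is correct. Note that the paper does not actually prove Theorem~\ref{thm-Blanchard}: it cites \cite{Blanchard} and only records the one ingredient it needs later, namely Lemma~\ref{lem-E21}. Your write-up therefore supplies a self-contained proof in the paper's own language, and it is the natural one. Your characterizations (a) and (b) are exactly the right algebraic translations: (b) is Lemma~\ref{lem-E21} together with its easy converse via compactness, and (a) is Theorem~\ref{thm-fund}(1) upgraded from $E^+$ to $\Aa^+$ using Lemma~\ref{cor-1}; your caution about the case where the witnessing $f$ lies in $T^+$ is warranted (for a non-injective $T^+$-action one can have $\alpha^t(x)=\alpha^t(y)$ with $x\neq y$) and is correctly resolved by postcomposing with $\alpha^{s_\nu}$, $s_\nu\to+\infty$, and passing to a convergent subnet. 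Given (a) and (b), the direction ``left simple $\Rightarrow$ almost distal'' is immediate as you say. Your converse is essentially the same computation the paper carries out in its direct proof of Cor.~\ref{cor-2}: proximality of $x$ and $p(x)$ plus almost distality yields $gp=g$ for every $g\in\Aa^+$, hence $\Aa^+=\Aa^+p\subset E^+p\subset \M^+\subset\Aa^+$, and the chain collapses. The two small points worth spelling out in a final version are (i) why $E^+p$ is a \emph{minimal} left ideal: the minimal idempotent $p$ lies in some minimal left ideal $L$ of $E^+$ by Theorem~\ref{thm-Rees-top}, and $E^+p$ is a nonempty left ideal contained in $L$, hence equal to it; and (ii) in (b), that pointwise convergence $\alpha^{t_\nu}\to g$ and continuity of $d$ give $d(\alpha^{t_\nu}(x),\alpha^{t_\nu}(y))\to d(g(x),g(y))$, which is what makes both directions of (b) work. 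Both are routine, and your closing remark that minimal left ideals are left simple as semigroups is also correctly justified.
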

The proof is based on a simple lemma which we need explicitly.
%%%%%%%%%%%%%%%% 
\begin{lemma}\label{lem-E21} 
Let $(X,\alpha,T^+)$ be a dynamical system and $f\in \Aa(X,T^+)$. If $x,y$ are asymptotic then $f(x) = f(y)$. 
\end{lemma}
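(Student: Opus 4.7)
The plan is to unwind the definitions: since $f\in\Aa(X,T^+)$, there exists a net $(t_\nu)$ in $T^+$ with $\lim t_\nu = +\infty$ such that $f = \lim \alpha^{t_\nu}$ pointwise. In particular, $\alpha^{t_\nu}(x)\to f(x)$ and $\alpha^{t_\nu}(y)\to f(y)$ in $X$.

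Next I would combine this with asymptoticity. Given any $\epsilon > 0$, by definition of asymptoticity there exists $T_\epsilon\in T^+$ such that $d(\alpha^t(x),\alpha^t(y)) < \epsilon$ for all $t \geq T_\epsilon$. Since $t_\nu \to +\infty$, the relation $t_\nu \geq T_\epsilon$ holds eventually in the net, so $d(\alpha^{t_\nu}(x),\alpha^{t_\nu}(y))<\epsilon$ eventually. Hence
\[
\lim_\nu d(\alpha^{t_\nu}(x),\alpha^{t_\nu}(y)) = 0.
\]

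Finally, continuity of the metric $d:X\times X\to\mathbb{R}$ together with $\alpha^{t_\nu}(x)\to f(x)$ and $\alpha^{t_\nu}(y)\to f(y)$ yields
\[
d(f(x),f(y)) = \lim_\nu d(\alpha^{t_\nu}(x),\alpha^{t_\nu}(y)) = 0,
\]
so $f(x) = f(y)$. There is no real obstacle here; the only subtlety is making sure that the defining net for $f$ can be chosen with $t_\nu \to +\infty$, which is guaranteed by membership in $\Aa(X,T^+)$ (and is exactly the content of Lemma~\ref{cor-1} for elements outside $T^+$).
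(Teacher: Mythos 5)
Your proof is correct and follows exactly the paper's argument: represent $f$ as a pointwise limit of $\alpha^{t_\nu}$ with $t_\nu\to+\infty$, use asymptoticity to make $d(\alpha^{t_\nu}(x),\alpha^{t_\nu}(y))$ eventually small, and conclude $d(f(x),f(y))=0$ by continuity of the metric. The paper's version is merely terser; no differences in substance.
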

\begin{proof} By assumption $f=\lim \alpha^{t_\nu}$ with $\lim t_\nu = +\infty$.
Hence for any finite $t\in T^+$ there exists $\nu_0$ such that 
$t_{\nu}\geq t$ for all $\nu\succeq\nu_0$. In particular, if $x$ and $y$ are asymptotic points, so that $\lim_{t\to\infty} d(\alpha^t(x),\alpha^t(y))=0$, then $d(f(x) , f(y))=0$. 
\end{proof}
In a similar context, we recall the corollary from \cite{ESBS}. 
\begin{cor}\label{cor-2} If  $(X,\alpha,T^+)$ is almost distal then $E(X,T^+)$ is nearly left simple.
\end{cor}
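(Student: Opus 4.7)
The plan is to derive this corollary as an immediate consequence of the two results already assembled just above it. By Theorem~\ref{thm-Blanchard}, the hypothesis that $(X,\alpha,T^+)$ is almost distal is equivalent to the adherence semigroup $\Aa(X,T^+)$ being left simple. Feeding this into the second statement of Lemma~\ref{lem-2} then yields exactly the conclusion that $E(X,T^+)$ is nearly left simple. So the entire proof is a one-line application of \ref{thm-Blanchard} followed by \ref{lem-2}.

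If I want to be slightly more explicit about what is packaged in those references, I would unpack the argument as follows. First, almost distality gives left simplicity of $\Aa^+$. Second, since $\M^+$ is an ideal of $\Aa^+$ and $\Aa^+$ is left simple (in particular simple), one has $\Aa^+=\M^+$, and $\M^+$ is itself left simple, so $E^+$ has a unique minimal left ideal. Finally, Lemma~\ref{cor-1} gives $E^+\setminus T^+\subset \Aa^+=\M^+$, so every non-invertible element of $E^+$ lies in the unique minimal left ideal (elements of $T^+$ are invertible in $E^+$ since $T=\Z$ or $\R$ and $\alpha^t$ has $\alpha^{-t}$ as inverse; but note that for the monoid $E^+$ the only invertible elements are those in $T^+$). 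This is precisely the definition of near left simplicity.

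There is essentially no obstacle here: the real content is in Theorem~\ref{thm-Blanchard} and in Lemma~\ref{lem-2}, both of which have already been proved or quoted. The only point that deserves a sentence of care is making sure the word ``non-invertible'' is handled correctly, i.e.\ that $T^+\subset E^+$ really consists of the invertible elements of the monoid $E^+$, so that ``$E^+\setminus T^+\subset \M^+$'' indeed says that all non-invertible elements lie in the unique minimal left ideal. Once this is observed, the proof reduces to a two-sentence citation.
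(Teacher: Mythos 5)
Your proposal is correct and coincides with the paper's own proof, which literally begins ``This follows from Thm.~\ref{thm-Blanchard} and Lemma~\ref{lem-2}'' (the paper then adds an optional self-contained argument via Lemma~\ref{lem-E21}, which you do not need). One caution about your parenthetical: $\alpha^{-t}$ is in general \emph{not} an element of $E^+=\overline{\{\alpha^s:s\in T^+\}}$ (and for a genuine $T^+$-action by non-invertible surjections it need not even exist as a map), so your stated reason that $T^+$ consists of invertible elements of the monoid $E^+$ does not hold as written; the paper's proof of Lemma~\ref{lem-2} glosses over exactly the same point, concluding near (left) simplicity directly from $E^+=\M^+\cup T^+$, so this does not put your argument at odds with the paper's.
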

\begin{proof} This follows from Thm.~\ref{thm-Blanchard} and Lemma~\ref{lem-2}. A direct proof which is based on the last lemma goes as follows: Given any idempotent $p\in E^+$, any $x\in X$ is proximal to $p(x)$, and hence, if $(X,\alpha,T^+)$ is almost distal, asymptotic to $p(x)$. Now Lemma~\ref{lem-E21} shows that $f(p(x))=f(x)$
provided $f\in \Aa^+$. Since $x$ was arbitrary we find $f=fp$. Hence any  $f\in \Aa^+$ lies in the left ideal generated by the idempotent $p$. If $p$ is minimal then this left ideal is a minimal left ideal. Since $p$ can be any minimal idempotent there can only be one minimal left ideal. It follows that
$E^+ = \Aa^+ \cup T^+\subset  E^+p \cup T^+= \M^+\cup T^+$.
Hence $E^+ =  \M^+\cup T^+$ and $\M^+$ is left simple.
\end{proof}

%%%%%%%%%%%%%%%%%%%%%%%%%%%
\subsection{Almost distal $T$-actions}
%%%%%%%%%%%%%%%%%%%%%%%%%%%%
We now consider a system $(X,\alpha,T)$ with an action by homeomorphisms together with its restrictions to the forward and to the backward motion which are $(X,\alpha,T^+)$ and $(X,\alpha,T^-)=(X,\alpha^{-1},T^+)$. Note that the $T^+$ and the $T^-$-action are then not only surjective but even bijective. In this case forward proximality (or asymptoticity) is the same as proximality (or asymptoticity) for the $T^+$-action. 
\begin{definition}
Let $(X,\alpha,T)$ be a dynamical system with an action by homeomorphisms. It is called {\em almost distal}, if both, $(X,\alpha,T^+)$ and $(X,\alpha,T^-)$ are almost distal.
\end{definition}
Note that for an almost distal system, forward and backward proximality are transitive relations.

\begin{prop} \label{prop-almost-distal}
Let $(X,\alpha,T)$ be an almost distal system. Then $E$ is nearly simple. 
\end{prop}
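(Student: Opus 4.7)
The plan is to combine the decomposition $\M = \M^+ \cup \M^-$ from Corollary~\ref{cor-E1} with the collapse $\Aa^\pm = \M^\pm$ forced by one-sided almost distality, and then to classify every element of $E$ by a compactness argument on the exponents of an approximating net.

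First, since by hypothesis both $(X,\alpha,T^+)$ and $(X,\alpha,T^-)$ are almost distal, Theorem~\ref{thm-Blanchard} gives that $\Aa^+$ and $\Aa^-$ are both left simple, and Lemma~\ref{lem-2} then yields $\Aa^+ = \M^+$ and $\Aa^- = \M^-$. Combined with Corollary~\ref{cor-E1}, this gives $\Aa^+ \cup \Aa^- = \M^+ \cup \M^- = \M$, and $\M$ is the unique minimal two-sided ideal of $E$ by Theorem~\ref{thm-Rees-top}.

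Next, I fix an arbitrary $f \in E$ that is not invertible and write $f = \lim_\nu \alpha^{t_\nu}$ for some net $(t_\nu) \subset T$. Regarding this net as lying in the two-point compactification $T \cup \{-\infty,+\infty\}$, I extract by compactness a subnet converging to some $t^* \in T \cup \{-\infty,+\infty\}$; the associated subnet of $(\alpha^{t_\nu})$ still converges pointwise to $f$. If $t^*$ were finite, then since $\alpha$ acts by homeomorphisms the limit would be $f = \alpha^{t^*}$, which is invertible in $E$ with inverse $\alpha^{-t^*}$, contradicting the choice of $f$. Hence $t^* = +\infty$ or $t^* = -\infty$, which puts $f$ in $\Aa^+ = \M^+$ or in $\Aa^- = \M^-$, and in either case $f \in \M$.

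Thus every non-invertible element of $E$ lies in the kernel $\M$, which together with the uniqueness of $\M$ is exactly the statement that $E$ is nearly simple. The only subtlety is the subnet-limit step: one must confirm that a finite limit $t^*$ of an exponent subnet really produces the function $\alpha^{t^*}$. In the discrete case $T=\Z$ this is immediate since convergence means eventual equality, and in the case $T=\R$ it follows from the joint continuity of the flow, so I do not expect a genuine obstacle here.
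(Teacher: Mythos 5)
Your proof is correct and follows essentially the same route as the paper: decompose $E$ into its forward and backward parts, use almost distality to collapse $\Aa^\pm$ to $\M^\pm$ (so $E^\pm=\M^\pm\cup T^\pm$), and invoke Corollary~\ref{cor-E1} to identify $\M^+\cup\M^-$ with the kernel $\M$. Your explicit compactification of the exponent net just re-derives the content of Lemma~\ref{cor-1} and Corollary~\ref{cor-2}, which the paper cites instead.
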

\begin{proof} 
%If $(X,\alpha,T)$ is distal then $E=E^+=E^-$ and the result follows from Cor.~\ref{cor-2}.
%Otherwise $1\notin J^-_{min}\cap J^+_{min}$ and we conclude from 
By Cor.~\ref{cor-E1} we have $\M = \M^+\cup\M^-$. As $E=E^+\cup E^-=\M^+\cup\M^-\cup T$ we see that all non-invertible elements belong to $\M$.
\end{proof}

As an application we consider 
subshifts defined by bijective substitutions. Such a subshift is almost distal \cite{ESBS}, so by Prop.~\ref{prop-almost-distal} it has a nearly simple Ellis semigroup. Furthermore, the proximality relation for such a subshift is not transitive, as its coincidence rank is larger than $1$ \cite{Aujogue-Barge-Kellendonk-Lenz}. By Theorem~\ref{thm-E1} its Ellis semigroup has exactly two minimal left ideals, one associated with the forward and one with the backward motion.

%%%%%%%%%%%%%
\subsection{$\Z^+$ and $\Z$-actions on totally disconnected spaces} 

A topological space is {\em totally disconnected} if it has a base of clopen subsets. Well known examples are subshift spaces. The one-sided, or two-sided, full shift over a finite alphabet $\Aa$ is the space of sequences $\Aa^{\Z^+}$, or $\Aa^\Z$, equipped with the product topology. This topology is metrisable; we may for instance use the metric $d(x,y) = e^{-N(x,y)}$ where $N(x,y)$ is the supremum of all $N$  such that for all $|n|\leq N$ we have $x_n=y_n$. The closed ball of radius $e^{-N}$ centered at $x$ is the set of sequences $y$ which agree with $x$ for all indices $|n|\leq N$. Its complement is a finite union of such balls, so closed balls are open and  $\Aa^{\Z^+}$ and $\Aa^\Z$  totally disconnected.
We denote the (left) shift on  $\Aa^{\Z^+}$ and $\Aa^\Z$ by $\sigma$: $\sigma(x)_n = x_{n+1}$. It is a continuous surjective map on $\Aa^{\Z^+}$ and a 
homeomorphism on $\Aa^\Z$. 
The restriction of $\sigma$ to 
any closed shift invariant subspace of $\Aa^{\Z^+}$, or $\Aa^\Z$, is 
a topological dynamical system on a totally disconnected space, it is called a one-sided, or two-sided, subshift.

On a subshift $(X,\sigma)$, two sequences $x,y\in X$ are forward asymptotic if and only if they agree to the right, that is, there exists $n_0$ such that $x_n=y_n$ for all $n> n_0$. They are forward proximal, precisely if they agree on larger and larger segments on the right, that is, for all $N$ exists $n_0\geq 0$ such that $x_n=y_n$ for all $n_0< n\leq N+n_0$.
 
Let $(X,\alpha)$ be a $\Z^+$, or a $\Z$-action on a compact space $X$ and $U_1,\cdots, U_k$ a partition of $X$ into clopen subsets. Let $\Aa=\{1,\cdots,k\}$ viewed as an alphabet of $k$ letters.
The coding of $(X,\alpha)$ defined by the partition is the map $\phi:X\to\Aa^{\Z^+}$, or $\phi:X\to\Aa^\Z$, given by $\phi(x)_n = i$ if $\alpha^n(x)\in U_i$. By construction, $\phi$ is a continuous $\Z$-equivariant map and so its image is a compact shift invariant subspace, that is, a subshift. Hence $\phi:(X,\alpha)\to (\phi(X),\sigma)$
is a factor map onto a subshift.
%%%%%%%%%%%%%%%%%%%%%%%%%%%%%%%%%%
\subsection{Li-Yorke pairs}
A forward Li-Yorke pair is a forward proximal pair which is not forward asymptotic. (For $\Z$-actions, forward means for the $\Z^+$-action). 
Note that in a subshift space, a pair $x,y$ is Li-Yorke if and only if there exists two strictly increasing sequences $(n_k)_k$, $(N_k)_k$ of $\Z^+$ such that 
\begin{equation}\label{eq-LY-pair}
x_{n_k}\neq y_{n_k}\quad \mbox{but}\quad \forall n_k<n\leq N_k+n_k : x_n = y_n
\end{equation}

\begin{lemma}\label{lem-sub}
Consider a (one- or two-sided) subshift $(X,\sigma)$
which has a forward Li-Yorke pair $x,y$. There exists $f\in E^+$ 
such that $f(x)\neq f(y)$ but $f(x)$ and $f(y)$ are forward asymptotic. In particular, $f$ is not injective on its image and therefore not completely regular.

\end{lemma}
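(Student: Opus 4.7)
The plan is to realize the desired $f$ as a limit of shift powers $\sigma^{n_k}$ that carry the disagreement position of the Li-Yorke pair $x,y$ to coordinate $0$ while keeping the agreement on coordinates $\geq 1$. Using characterization~\eqref{eq-LY-pair}, pick sequences $(n_k)$ and $(N_k)$ such that $x_{n_k}\neq y_{n_k}$ but $x_n=y_n$ for all $n_k<n\leq n_k+N_k$. Since the alphabet $\Aa$ is finite, I first refine $(n_k)$ to a subsequence along which the pair $(x_{n_k},y_{n_k})$ is a constant pair $(a,b)$ with $a\neq b$; the corresponding $N_k$'s remain unbounded.

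Next, using that $E^+$ is a compact subset of $X^X$, extract a convergent subnet of $(\sigma^{n_{k}})$, say $(\sigma^{n_{k_\nu}})$, with limit $f\in E^+$. Because $n_{k_\nu}\to+\infty$, the element $f$ lies in the adherence semigroup $\Aa^+=\Aa(X,T^+)$. Evaluating pointwise, $f(x)_0=\lim x_{n_{k_\nu}}=a$ and $f(y)_0=\lim y_{n_{k_\nu}}=b$, so $f(x)_0\neq f(y)_0$. For any $m\geq 1$, we have $N_{k_\nu}\geq m$ eventually, so $x_{n_{k_\nu}+m}=y_{n_{k_\nu}+m}$, giving $f(x)_m=f(y)_m$. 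Thus $f(x)\neq f(y)$ while they agree on all coordinates $\geq 1$, which is exactly forward asymptoticity.

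To deduce that $f$ is not injective on its image, I would apply Lemma~\ref{lem-E21} to $f\in\Aa^+$ together with the forward-asymptotic pair $(f(x),f(y))$: the lemma yields $f(f(x))=f(f(y))$. Since $f(x)$ and $f(y)$ are two distinct elements of $\im f$ that are collapsed by a further application of $f$, the restriction of $f$ to $\im f$ is not injective. Invoking Lemma~\ref{lem-cpreg1}, $f$ is not completely regular.

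The only step requiring any real care is the passage from the sequence $(\sigma^{n_k})$ in the non-metrizable space $E^+$ to a convergent subnet; everything else is bookkeeping. The key structural input is Lemma~\ref{lem-E21}, which translates forward asymptoticity of the pair $(f(x),f(y))$ into the identity $f(f(x))=f(f(y))$ and thereby bridges the gap between the Li-Yorke property at the level of $X$ and failure of complete regularity at the level of $E^+$.
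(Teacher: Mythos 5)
Your proposal is correct and follows essentially the same route as the paper's proof: pass to a sub(net/sequence) of $(\sigma^{n_k})$ so that the images of $x$ and $y$ converge to a distinct but forward-asymptotic pair, then apply Lemma~\ref{lem-E21} to collapse that pair under $f\in\Aa^+$ and conclude via Lemma~\ref{lem-cpreg1}. The only cosmetic difference is that you stabilize the symbols at position $n_k$ by finiteness of the alphabet where the paper invokes compactness of $X$; both are fine.
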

\begin{proof}
Given a forward Li-Yorke pair $x,y$ let  $(n_k)_k$, $(N_k)_k$ be strictly increasing sequences satisfying (\ref{eq-LY-pair}). By compactness of $X$ we may go over to subsequences to assure that 
$\sigma^{n_k}(x)$ and $\sigma^{n_k}(y)$ converge towards
 $\tilde x$ and $\tilde y$.
These satisfy $\tilde x_0\neq \tilde y_0$ and $\tilde x_n=\tilde y_n$ for all $n>0$. 
Hence $\tilde x$ and $\tilde y$ are forward asymptotic but not equal. By compactness of $E(\Z^+)$ there exists $f\in E(\Z^+) $ such that $f(x) = \lim_k\sigma^{n_k}(x)=\tilde x$ and $f(y) = \lim_k\sigma^{n_k}(y)=\tilde y$. By Lemma~\ref{lem-E21}, $f(\tilde x) =f(\tilde y) $, as   $\tilde x$ and $\tilde y$ are forward asymptotic. Hence $f$ is not injective on its image. By Lemma~\ref{lem-cpreg1}, $f$ is not completely regular.
\end{proof}

\begin{prop} \label{prop-Li-Y}
The Ellis semigroup $E(X,\Z^+)$ of a dynamical system $(X,\alpha,\Z^+)$ 
on a totally disconnected compact metric space 
which admits a Li-Yorke pair is not completely regular.
\end{prop}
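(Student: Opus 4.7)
The plan is to reduce the claim to the subshift case already handled by Lemma~\ref{lem-sub}. Given a forward Li-Yorke pair $x,y$ in $X$, I would produce a factor map $\phi:(X,\alpha,\Z^+)\to(Y,\sigma,\Z^+)$ onto a subshift in which $(\phi(x),\phi(y))$ is again a forward Li-Yorke pair, then apply the contrapositive of Corollary~\ref{cor-factor}.

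To build $\phi$, I first extract from the failure of forward asymptoticity an $\epsilon>0$ and a sequence $s_k\to+\infty$ with $d(\alpha^{s_k}(x),\alpha^{s_k}(y))\geq\epsilon$. Using total disconnectedness together with compactness of $X$, I pick a finite clopen partition $U_1,\dots,U_m$ all of whose cells have diameter strictly less than $\epsilon$, and let $\phi:X\to\{1,\dots,m\}^{\Z^+}$ be the associated coding map, which is a continuous $\Z^+$-equivariant factor onto the subshift $Y:=\phi(X)$.

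The crucial step, and the main technical content, is checking that $(\phi(x),\phi(y))$ really is a forward Li-Yorke pair in $Y$. Non-asymptoticity is immediate from the diameter bound: for each $k$, the points $\alpha^{s_k}(x)$ and $\alpha^{s_k}(y)$ cannot share a cell, so $\phi(x)_{s_k}\neq\phi(y)_{s_k}$. Forward proximality requires a uniform-continuity argument. Because the cells of the partition are pairwise disjoint closed subsets of a compact space, the minimum inter-cell distance $\eta:=\min_{i\neq j}d(U_i,U_j)$ is strictly positive. For any desired window length $N$, uniform continuity of $\alpha^0,\dots,\alpha^N$ yields a $\delta>0$ such that $d(u,v)<\delta$ forces $d(\alpha^i(u),\alpha^i(v))<\eta$ for $0\leq i\leq N$. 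Forward proximality of $(x,y)$ (together with the Li-Yorke hypothesis ruling out eventual equality, which is what lets us take the proximality witnesses to go to $+\infty$) then gives a time $t$ with $d(\alpha^t(x),\alpha^t(y))<\delta$, whence $\phi(x)_n=\phi(y)_n$ on the block $t\leq n\leq t+N$. This is exactly the subshift forward-proximality criterion recalled before (\ref{eq-LY-pair}).

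With $(\phi(x),\phi(y))$ identified as a forward Li-Yorke pair in $Y$, Lemma~\ref{lem-sub} gives that $E(Y,\Z^+)$ contains an element which is not completely regular, so $E(Y,\Z^+)$ is not a completely regular semigroup. Corollary~\ref{cor-factor}, whose proof applies unchanged with $T^+$ in place of $T$, says that complete regularity passes from $E(X,\Z^+)$ to any factor; contrapositively, $E(X,\Z^+)$ cannot be completely regular either. The only place where any real difficulty could arise is in the window-agreement argument of the previous paragraph, and that is handled cleanly by the positivity of $\eta$, which is exactly the feature that total disconnectedness provides.
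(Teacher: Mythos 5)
Your proof is correct and follows essentially the same route as the paper: code $(X,\alpha,\Z^+)$ by a clopen partition with cells of diameter below the non-asymptoticity gap, check that the image of the Li-Yorke pair is again a forward Li-Yorke pair in the resulting subshift, and conclude via Lemma~\ref{lem-sub} and the contrapositive of Corollary~\ref{cor-factor}. The only difference is that you spell out the preservation of forward proximality under the coding (via the positive inter-cell distance $\eta$ and uniform continuity of the first $N$ iterates), a step the paper dispatches with the one-line remark that factor maps preserve proximality.
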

\begin{proof} Choose a metric $d$.
Let $l,y$ be a forward Li-Yorke pair so that 
$$\delta := \limsup_{n\to+\infty} d(\alpha^n(l),\alpha^n(y))>0.$$ Consider a partition $\{U_1,\cdots,U_k\}$ of $X$ into clopen subsets of size $\frac{\delta}2$. Coding with this particion yields a factor map $\phi$ onto a subshift. 
Factor maps preserve asymptoticity and, if 
$d(\alpha^n(l),\alpha^n(y))\geq \delta$ then $\phi(l)_n\neq \phi(y)_n$. Thus $ \phi(l),\phi(y)$ is a forward Li-Yorke pair of the subshift. By Lemma~\ref{lem-sub} the Ellis semigroup of the subshift is not completely regular. By Cor.~\ref{cor-factor} $E(X,\Z^+)$ is not completely regular.
\end{proof}
This leads to the main theorem of our work.
\begin{theorem} \label{thm-main}\label{thm-complete-regularity}
Consider a dynamical system $(X,\alpha,T)$ on a totally disconnected compact metric space where $T=\Z^+$ or $T=\Z$. The following assertions are equivalent.
\begin{enumerate}
\item\label{item1} $(X,\alpha,T)$ is almost distal.
\item\label{item2} $E(X,T)$ is nearly simple.
\item\label{item3} $E(X,T)$ is completely regular.
\end{enumerate}
\end{theorem}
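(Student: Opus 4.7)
The plan is to prove the cycle of implications $(\ref{item1}) \Rightarrow (\ref{item2}) \Rightarrow (\ref{item3}) \Rightarrow (\ref{item1})$, noting that total disconnectedness is needed only for the last step. For $(\ref{item1}) \Rightarrow (\ref{item2})$, when $T=\Z$ this is exactly Proposition~\ref{prop-almost-distal}; when $T=\Z^+$, Corollary~\ref{cor-2} already delivers $E(X,\Z^+)=\M^+\cup T^+$ with $\M^+$ left simple, and since a left simple ideal is the kernel and absorbs every non-invertible element, $E(X,\Z^+)$ is nearly simple. For $(\ref{item2}) \Rightarrow (\ref{item3})$, I would invoke Corollary~\ref{cor-ns}: a nearly simple monoid whose kernel contains a minimal idempotent is completely regular, and by Theorem~\ref{thm-Rees-top} the kernel of any compact right-topological semigroup---in particular of $E(X,T)$---automatically contains minimal idempotents.

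For $(\ref{item3}) \Rightarrow (\ref{item1})$ I would argue the contrapositive. Suppose $(X,\alpha,T)$ is not almost distal. If $T=\Z^+$ this means the existence of a forward Li-Yorke pair, and Proposition~\ref{prop-Li-Y} concludes directly. If $T=\Z$, then one of the actions $(X,\alpha,\Z^+)$ or $(X,\alpha,\Z^-)$ fails to be almost distal; by symmetry I may assume there is a forward Li-Yorke pair $l,y\in X$. I would then mimic the proof of Proposition~\ref{prop-Li-Y}: code $(X,\alpha,\Z)$ by a clopen partition of diameter less than $\delta/2$, where $\delta=\limsup_{n\to\infty} d(\alpha^n l,\alpha^n y)>0$, to produce an equivariant factor map $\phi:(X,\alpha,\Z)\to(\phi(X),\sigma,\Z)$ onto a two-sided subshift in which $\phi(l),\phi(y)$ remains a forward Li-Yorke pair. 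Lemma~\ref{lem-sub} then provides an element $f\in E(\phi(X),\Z^+)\subset E(\phi(X),\Z)$ whose restriction to its image fails to be bijective. By Lemma~\ref{lem-cpreg1} this $f$ is not completely regular in $X^X$, hence not in the subsemigroup $E(\phi(X),\Z)$, so the latter is not completely regular. Corollary~\ref{cor-factor} applied in contrapositive form then forces $E(X,\Z)$ to fail complete regularity as well.

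The delicate point, and the one I expect to warrant the most care, is this final transfer from the forward Ellis semigroup $E^+$ to the full Ellis semigroup $E$. Lemma~\ref{lem-sub} only places the pathological element in the smaller semigroup $E(\phi(X),\Z^+)$, while complete regularity is being asked about in $E(\phi(X),\Z)$. The rescue comes from observing that Lemma~\ref{lem-cpreg1} characterises complete regularity by the purely function-theoretic condition that the restriction to the image be bijective: this condition refers to no ambient semigroup at all, so the moment an element of any subsemigroup of $X^X$ violates it, complete regularity fails in every subsemigroup of $X^X$ that contains it. This observation, together with the subshift coding that realises a forward Li-Yorke pair as a pair differing in a single coordinate, is what makes the proof work in the totally disconnected setting.
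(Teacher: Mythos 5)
Your proof is correct and follows essentially the same route as the paper: Corollary~\ref{cor-2} (resp.\ Proposition~\ref{prop-almost-distal}) for (\ref{item1})$\Rightarrow$(\ref{item2}), Corollary~\ref{cor-ns} with Theorem~\ref{thm-Rees-top} for (\ref{item2})$\Rightarrow$(\ref{item3}), and the contrapositive via Li--Yorke pairs and Proposition~\ref{prop-Li-Y} for (\ref{item3})$\Rightarrow$(\ref{item1}). Your explicit remark that Lemma~\ref{lem-cpreg1} makes complete regularity of an element of $X^X$ an intrinsic, ambient-independent property---so that the pathological element produced in $E(\phi(X),\Z^+)$ also obstructs complete regularity of the larger semigroup $E(\phi(X),\Z)$---is exactly the point the paper leaves implicit when it applies Proposition~\ref{prop-Li-Y} to the forward and backward motions, and it is worth spelling out as you do.
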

\begin{proof}
Let $T=\Z^+$. The implication (\ref{item1}) $\Rightarrow$ (\ref{item2}) is Cor~\ref{cor-2}. 
The implication (\ref{item2}) $\Rightarrow$ (\ref{item3}) follows from Cor~\ref{cor-ns}, as we have already seen. Finally, the  implication (\ref{item3}) $\Rightarrow$ (\ref{item1}) can be seen as follows: 
A system which is not almost distal must contain a Li-Yorke pair and thus, by Prop.~\ref{prop-Li-Y}, cannot be completely regular.

Let $T=\Z$. The implication (\ref{item1}) $\Rightarrow$ (\ref{item2}) is Prop.~\ref{prop-almost-distal}. The implication (\ref{item2}) $\Rightarrow$ (\ref{item3}) follows again from Cor~\ref{cor-ns}, and (\ref{item3}) $\Rightarrow$ (\ref{item1}) follows again from Prop.~\ref{prop-Li-Y} applied to the forward and the backward motion.
\end{proof}

%%%%%%%%%%%%%%%%%%%%%%%%%%%%%%%%%%%%%%%%%%%%
%%%%%%%%%%%%%%%%%%%%%%%%%%%%%%%%%%%%%%%%%%%%%
\newcommand{\Zl}{{ \Z_\ell}}
\newcommand{\Ef}{E^{fib}}

\newcommand{\al}{{\color{orange}\alpha}}
\newcommand{\be}{{\color{orange}\omega}}
\newcommand{\alb}{{\color{blue}\bar\alpha}}
\newcommand{\beb}{{\color{blue}\bar\omega}}
\newcommand{\eps}{\epsilon}
\renewcommand{\a}{{\color{blue}\boxdot}}
\renewcommand{\c}{{\color{red}\boxminus}}
\renewcommand{\b}{{\color{orange}\boxplus}}
\newcommand{\cb}{{\boxtimes}}
\newcommand{\ab}{{\Box}}

\renewcommand{\aa}{\a\a\c\a\a}
\newcommand{\bb}{\a\b\c\a\a}
\newcommand{\cc}{\a\c\c\b\a}
\newcommand{\aaa}{\aa\aa\cc\aa\aa}
\newcommand{\bbb}{\aa\aa\cc\bb\aa}
\newcommand{\ccc}{\aa\cc\cc\bb\aa}
\newcommand{\aaaa}{\aaa\aaa\ccc\aaa\aaa}
\newcommand{\bbbb}{\aaa\aaa\ccc\bbb\aaa}
\newcommand{\cccc}{\aaa\ccc\ccc\bbb\aaa}

\section{Example: a non-completely regular $E(X,\Z)$}
We provide an explicit example of a dynamical system which has an Ellis semigroup which is not completely regular. Incidently our example is backward almost distal but not forward almost distal. Examples which are only one-sided almost distal were already constructed in \cite{AkinAuslanderGlasner}. Our example is given by a simple substitution of constant length.

General background on constant length substitutions, their dynamical systems, the description of their maximal equicontinuous factor and its fibres can be found in \cite{dekking,coven-quas-yassawi,ESBS}. We will use these results freely.

The substitution 
we will look at is defined on three symbols $\Aa = \{\a,\b,\c\}$. 
It is given by the map $\theta:\Aa\to\Aa^5$ (we think of $\Aa^5$ as words of length $5$ in $\Aa$)
$$
\begin{matrix}
\a \mapsto \a\a\c\a\a\\
\b \mapsto \a\b\c\a\a\\
\c \mapsto \a\c\c\b\a\\
\end{matrix}
$$
Extending this substitution by concatenation one obtains arbitrarily long words upon iterating $\theta$ on one symbol. The substitution dynamical system defined by $\theta$ is the subshift $(X_\theta,\sigma)$ whose space $X_\theta\subset \Aa^\Z$ contains all those sequences whose subwords occurr in some $\theta^N(\a)$, $N\in\N$. The general theory provides us with the following information.
\begin{enumerate}
\item The maximal equicontinuous factor of $(X_\theta,\sigma,\Z)$ is the adding machine in base $5$, $(\Z_5,(+1))$. One can think of the elements of $\Z_5$ as one sided infinite sequences $\{0,1,2,3,4\}^\N$, and the action of $\Z^+$ is given by addition in base $5$ with carry to the right. $\Z_5$ is an abelian group with neutral element $\bar 0$, as we denote the infinite sequence of $0$s.
\item The factor map $\pi:X_\theta \to \Z_5$ is one-to-one except on the pre-images of one orbit (under addition {of 1}) of points in $\Z_5$, namely the orbit of $\bar 1$, as we denote the infinite sequence of $1$s; we denote this orbit by $O_{\bar 1}$. This can be easily computed 
following the algorithm given in Sect.~3.4 of \cite{coven-quas-yassawi}.
\item The elements of the fibre $\pi^{-1}(\bar 1)$ are the fixed points under the map
$\tilde \theta:=\sigma\circ \theta$. There are three of them, in bijection to $\Aa$, obtained from the seeds $\a,\b,\c$. We show two iterations:
\begin{equation}\label{eq-fp}
\begin{matrix}
 . \a\\
 . \b\\
 . \c
\end{matrix} \stackrel{\widetilde{\theta}}\to \begin{matrix}
 \a . \a \c\a\a \\
 \a.  \b \c\a\a \\ 
 \a . \c \c\b\a
\end{matrix} \stackrel{\widetilde{\theta}}\to \begin{matrix}
\aa  \a . \a \c\a\a  \cc\aa\aa \\
\aa  \a . \b \c\a\a  \cc\aa\aa \\
\aa  \a . \c \c\b\a  \cc\bb\aa
\end{matrix}
\end{equation}
Here the dot serves to position the words in a bi-infinite sequence $x\in\Aa^\Z$, namely the letter to the right of the dot is $x_0$. We denote by $x_{\sf}$ the fixed point sequence with seed $\sf$.
All elements of  $\pi^{-1}(\bar{1})$ agree to the left, two agree also to right but the third one agrees with the others to the right only on larger an larger patches. Since $\sigma$ corresponds to the left shift, there are no backward Li-Yorke pairs, but two forward Li-Yorke pairs. Nevertheless, proximality is transitive for the subshift.
\end{enumerate}
\subsection{The fiber preserving part of $E(X_\theta,\Z)$}
Let $\Ef=\Ef(X_\theta,\Z)$ be the set of elements of $E=E(X_\theta,\Z)$
which preserve the fibers $\pi^{-1}(z)$ of the maximal equicontinuous factor map. It is easily seen \cite{ESBS} that these are precisely the elements which lie in the kernel of the map $\pi_*:E(X_\theta,\Z)\to E(\Z_5,\Z)$ induced by the fibre map on the Ellis semigroups. As $(\Z_5,(+1))$ is distal (and minimal abelian), evaluation at $\bar 0$ yields a group isomorphism $ E(\Z_5,\Z)\to\Z_5$. Moreover, an element $f\in E$ which preserves one fibre will preserve all fibers. 

We denote by $f_z$ the restriction of $f\in E$ to a map
$$f_z:\pi^{-1}(z) \to \pi^{-1}(z+\eta)$$
where $\eta=\pi_*(f)(\bar 0)\in\Z_5$. In particular, $\eta=0$ if and only if $f\in \Ef$.
An element of $E$ can be described as follows.
\begin{enumerate}
\item If $f\in \Ef$ then $f_z=\id$ provided $z\notin O_{\bar 1}$ whereas we can view  $f_{\bar 1}$ as a map from $\Aa$ to $\Aa$. Furthermore,
$f_{z+1} = \sigma f_z \sigma^{-1}$ by equivariance of the factor map. 
Hence any element of $\Ef$ is completely determined by a map from $\Aa$ to $\Aa$ and 
$\Ef$ is isomorphic to a subsemigroup of $\Aa^\Aa$. 
\item Suppose now that $f\notin \Ef$ so that $\eta=\pi_*(f)(\bar 0)\neq \bar 0$.  
If $\eta = \bar 0 + n$ for some $n\in \Z$ then $f\sigma^{-n}\in\Ef$ and we can apply essentially the same argument as above. So suppose that $\eta$
is not in the orbit of $\bar 0$. 
If $z+\eta\notin O_{\bar 1}$ then $f_z$ is uniquely determined by $\eta$, because $\pi^{-1}(z+\eta)$ contains a single point. If $z+\eta\in O_{\bar 1}$ then $z\notin O_{\bar 1}$ and hence $\pi^{-1}(z)$ contains a single point. Hence $\im f_z$ is a single point. 
By equivariance, $f$ is therefore uniquely determined by $\eta$ together with the unique point in $\im f\cap \pi^{-1}(\bar 1)$; we can view the latter as a choice of symbol from $\Aa$. 
%At this point it is not clear whether all three choices are possible.
\end{enumerate}
We start by determining $\Ef$, which, as we saw, amounts to determine the possibilities for $f_{\bar 1}$. For that we use an idea from \cite{ESBS} which is based on the equality 
$\theta\circ\sigma = \sigma^5\circ\theta$ together with the fact that $ \left.\tilde\theta\right|_{\pi^{-1}(\bar 1)}= \left.\id^{}\right|_{\pi^{-1}(\bar 1)}$. It implies
$$   \left.\tilde\theta^k\circ \sigma^n\right|_{\pi^{-1}(\bar{1})} = \left.\sigma^{n5^{k}}\right|_{\pi^{-1}(\bar{1})} $$
for all $n\in \Z$ and $k\in\N$.

Any element $f\in\Ef$ is a limit of a generalised sequence $f=\lim \sigma^{n_\nu}$ with $\pi_*(f)(\bar 0) = \bar 0$. By the continuity of $\pi_*$ this implies $\lim n_\nu = \bar 0$. It follows that for all $k$ there is $\nu_k$ such that for all $\nu\succeq \nu_k$ the number $n_\nu$ is divisible by $5^k$. 
In other words, we can factor $n_\nu = m_\nu 5^{k_\nu}$ with $m_\nu\in\Z$, $k_\nu\in\N$ such that $\lim k_\nu=+\infty$. Then
$$ f(x) = \lim \sigma^{m_\nu 5^{k_\nu}}(x) = 
\lim \tilde\theta^{k_\nu} \sigma^{m_\nu}(x)$$
As $x,f(x)\in \pi^{-1}(\bar 1)$, they are uniquely determined by their seed, which is their symbol on $0$. Denoting $ev_0: \pi^{-1}(\bar 1)\to \Aa$ the bijection $ev_0(x)=x_0$ we thus see that $ev_0\circ f\circ ev_0^{-1}\in \Aa^\Aa$ determines uniquely $f$. Hence  $\tilde\theta^{k_\nu} \sigma^{m_\nu}$ can only converge if $ev_0 \circ \sigma^{m_\nu}\circ ev_0^{-1}$ converges. Since $\Aa$ is finite, this means that $ev_0 \circ \sigma^{m_\nu}\circ ev_0^{-1}$ must become constant. We can read off the possible maps $ev_0 \circ \sigma^{m_\nu}\circ ev_0^{-1}$ from  the columns which occur in (\ref{eq-fp}), possibly after further application of $\tilde\theta$. Each column corresponds to a such map. Inspecting (\ref{eq-fp}) we find the following maps 
$$\Ef = \{\id,\Pi_\a,\Pi_\b, \Pi_\c,\phi\}$$
where 
$\Pi_{\sf}$ 
maps all symbols to $\sf$ and $\phi(\a) = \a$, $\phi(\b) = \a$, $\phi(\c) = \b$. 
Since $(x_\a,x_\b)$ is a forward asymptotic pair, no other maps will appear upon iteration of $\tilde\theta$.

All elements but $\phi$ are idempotents. $\Pi_\a,\Pi_\b, \Pi_\c$ are the minimal idempotents. The latter form the minimal two-sided ideal $\M^{fib}=\Ef\cap \M$ of $\Ef$. Their products are
$\Pi_{\sf} \Pi_{\sf'}=\Pi_{\sf}$ thus forming the so-called left zero semigroup of $3$ elements  $LZ_3$.
The products involving $\phi$ are,
$$\phi^2 = \Pi_\a,\quad \phi \Pi_\a = \Pi_\a \phi = \Pi_\a,\quad \phi \Pi_\b = \Pi_\a,\quad \phi \Pi_\c = \Pi_\b, 
\quad  \Pi_{\b} \phi = \Pi_{\b}, \quad  \Pi_{\c} \phi = \Pi_{\c}$$
the first relation showing that $\im\phi^2$ is strictly contained in $\im \phi$ and thus $\phi$ not completely regular.

\subsection{Full Ellis semigroup} The system $(X_\theta,\sigma,\Z)$ is an almost one-to-one extension of its maximal equicontinuous factor. It is a general fact that for those systems the kernel $\M=\M(X,T)$ is a direct product of $\M^{fib}=\M^{fib}(X,T)$ with the maximal equicontinuous factor \cite{Aujogue-Barge-Kellendonk-Lenz}. 
Thus in our present situation 
$$\M \ni f \mapsto (\Pi_{\sf},\pi_*(f)(\bar 0)) \in LZ_3\times \Z_5$$  is an isomorphism of semigroups,
where $\Pi_{\sf}$ is the unique minimal idempotent such that $\Pi_{\sf} f=f$. This isomorphism of semigroups is not a homeomorphism if 
$LZ_3\times \Z_5$ is equipped with the product topology.

The full Ellis semigroup $E$ contains $\M$, a copy of the acting group $\Z$, the element $\phi$ and their possible products. Hence it contains also $\phi\Z$ which not a group. 
Products of elements of $\phi\Z$ land in $\M$.
No element of $\phi\Z$ is completely regular and hence $\phi\Z$ does not intersect $\M$ nor $\Z$. 
 
The above calculation shows that all elements of $\Ef \Z$ are contained in $\M\sqcup \phi\Z\sqcup \Z$ ($\sqcup$ denotes disjoint union). Let $f\in E\backslash \Ef \Z$. Then $\pi_*(f)(\bar 0)\neq \bar 0 + \Z$ so that, 
as we saw above, $\im f\cap \pi^{-1}(\bar 1)$ contains a single point.   
%and this point together with $\pi_*(f)$ determines uniquely $f$.
Let $\sf\in\Aa$ such that $x_{\sf}$ is the unique point in  $\im f\cap \pi^{-1}(\bar 1)$. 
Since $\Pi_\sf$ is the identity on all fibres which are not in the orbit of $\bar 1$ we have
 $\Pi_{\sf} f=f$. As $\Pi_{\sf} f\in\M$ we have $f\in \M$. We thus have proven that  
 $$E(X_\theta,\Z) = \M\sqcup \phi \Z\sqcup \Z\cong LZ_3\times \Z_5\sqcup \phi \Z\sqcup \Z.$$
 Since the system is backward almost distal  the backward part of the Ellis semigroup is  
 ${E}(X_\theta,\Z^-) = \M^-\sqcup \Z^-$. 
 This is compatible with the observation that $\phi$ cannot be obtained as a limit $\lim \sigma^{n_\nu}$ with $n_\nu\to -\infty$. 
 On the other hand, $\phi\alpha^n$ belongs to $E^+$, for any $n\in \Z$. Thus  
$E(X_\theta,\Z^+) = \M^+\sqcup \phi \Z\sqcup \Z^+$. Finally, the proximality relation is transitive for $(X_\theta,\Z)$ so that by Thm.~\ref{thm-E1} we have $\M^-=\M^+\cong LZ_3\times \Z_5$.  

\bibliographystyle{abbrv}

%\bibliography{bibliography}

\def\ocirc#1{\ifmmode\setbox0=\hbox{$#1$}\dimen0=\ht0 \advance\dimen0
  by1pt\rlap{\hbox to\wd0{\hss\raise\dimen0
  \hbox{\hskip.2em$\scriptscriptstyle\circ$}\hss}}#1\else {\accent"17 #1}\fi}

\end{document}